\title[A'Campo's divide and Turaev's shadow]{Milnor fibration, A'Campo's divide and \\ Turaev's shadow}
\author{Masaharu Ishikawa}
\address{Department of Mathematics, Hiyoshi Campus, Keio University, 4-1-1 Hiyoshi, Kohoku-ku, Yokohama-shi, Kanagawa, 223-8521, Japan}
\email{ishikawa@keio.jp}
\author{Hironobu Naoe}
\address{Department of Mathematics, Chuo University, 1-13-27 Kasuga, Bunkyo-ku, Tokyo, 112-8551, Japan}
\email{naoe@math.chuo-u.ac.jp}
\theoremstyle{plain}
\newtheorem*{theorem*}{Theorem}
\newtheorem*{lemma*} {Lemma}
\newtheorem*{corollary*} {Corollary}
\newtheorem*{proposition*}{Proposition}
\newtheorem*{conjecture*}{Conjecture}
\newtheorem{theorem}{Theorem}[section]
\newtheorem{lemma}[theorem]{Lemma}
\newtheorem{corollary}[theorem]{Corollary}
\theoremstyle{remark}
\newtheorem*{remark}{Remark}
\newtheorem*{definition}{Definition}
\newtheorem*{example}{Example}
\theoremstyle{definition}
\newtheoremstyle{citing}
  {}
  {}
  {\itshape}
  {}
  {\bfseries}
  {.}
  {.5em}
  {\thmnote{#3}}
\theoremstyle{citing}
\newcommand{\Integer}{\mathbb{Z}}
\newcommand{\Real}{\mathbb{R}}
\newcommand{\Complex}{\mathbb{C}}
\newcommand{\Int}{\mathrm{Int}}
\newcommand{\Nbd}{\mathrm{Nbd}}
\newcommand{\bd}{\partial}
\newcommand{\Sing}{\mathrm{Sing}}
\newcommand{\gl}{\mathfrak{gl}}
\newcommand{\oP}{{\overrightarrow{P}}}
\newcommand{\oQ}{{\overrightarrow{Q}}}
\newcommand{\oPP}{{\overrightarrow{P}_2}}
\definecolor{darkred}{rgb}{.80,.0,.0}
\begin{document}

\maketitle

\begin{abstract}
We give a method for constructing a shadowed polyhedron from a divide.
The 4-manifold reconstructed from a shadowed polyhedron admits the structure of a Lefschetz fibration if it satisfies a certain property, which 
is formulated as an {\it LF-structure} on a shadowed polyhedron.
We will show that the shadowed polyhedron constructed from a divide satisfies this property
and the Lefschetz fibration of this polyhedron is isomorphic to the Lefschetz fibration of the divide.
Furthermore, applying the same technique to certain free divides we will show that the links of  those free divides are fibered with positive monodromy.
\end{abstract}

\section{Introduction}

A divide is the image of a generic and relative immersion of a finite number of intervals and circles into the unit disk, which was introduced by N.\,A'Campo in~\cite{AC99,AC98} as a generalization of real morsified curves of complex plane curve singularities~\cite{AC75a, AC75b, GZ74a, GZ74b, GZ77}.
A link in $S^3$ is defined from a divide and this link is fibered if the divide is connected.
Furthermore, if a divide is a real morsified curve of a complex plane curve singularity then its link is isotopic to the link of the singularity and the fibration is isomorphic to its Milnor fibration.
In~\cite{Ishi04}, the first author reformulated the fibration structure of a divide in terms of a Lefschetz fibration and generalized the definition of divides in the unit disk to those in compact orientable surfaces. In this generalized setting, the unit disk bundle in the cotangent bundle\footnote{
The Lefschetz fibration of a divide is constructed in the cotangent bundle of $\Sigma_{g,n}$
rather than the tangent bundle, though it was not carefully observed in~\cite{Ishi04}.
In this paper, according to the original paper of A'Campo, we call an element in the bundle a {\it tangent vector}  though it is a cotangent vector in actuality.
}
over a compact orientable surface is the total space of the Lefschetz fibration.
In the case of Milnor fibration, the total space corresponds to the Milnor ball,
a regular fiber corresponds to a Milnor fiber
and its boundary corresponds to the link of the singularity.

\begin{figure}[htbp]
\includegraphics[scale=0.6]{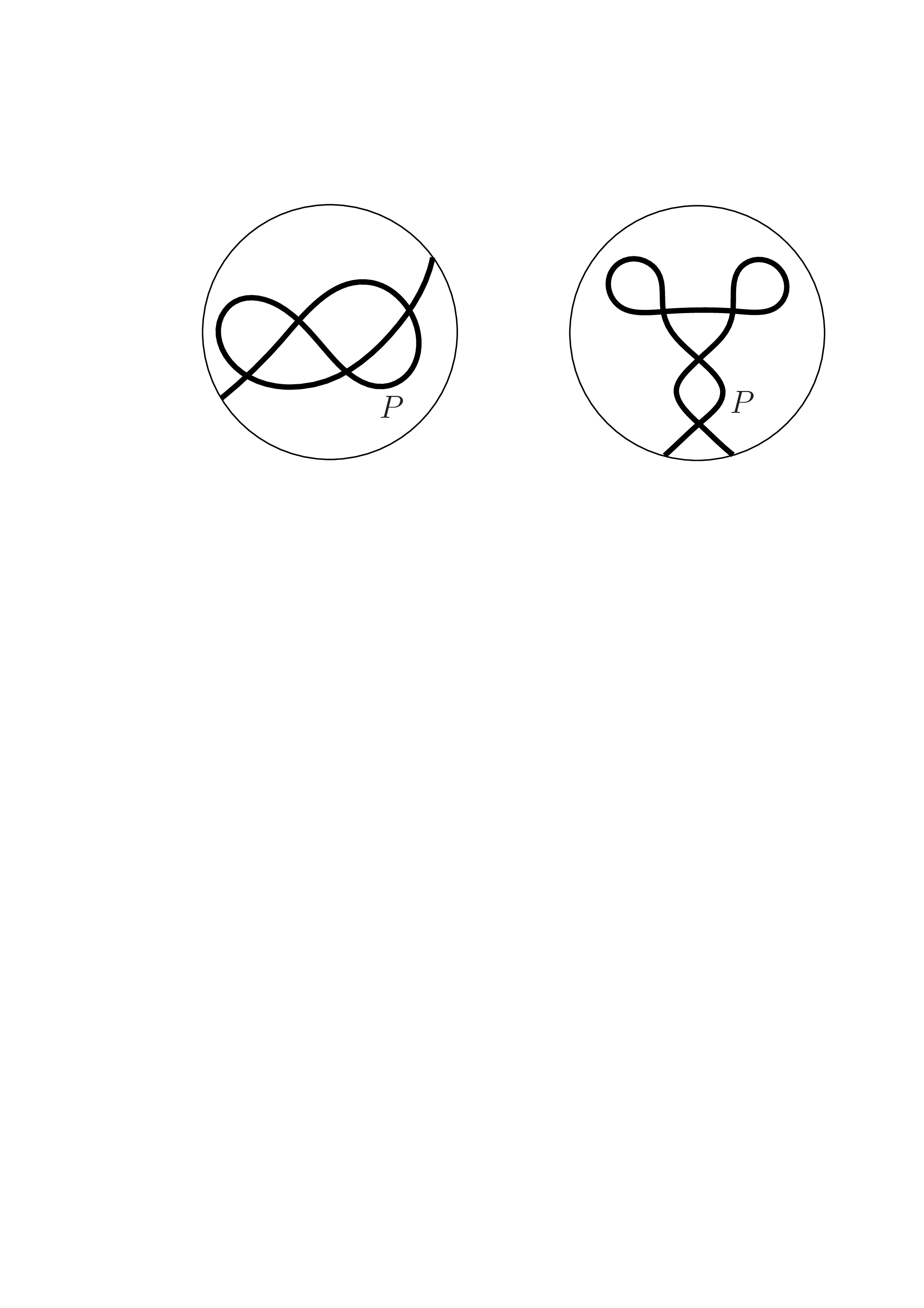}
\caption{Divides in the unit disk: The left is a divide of the $(3,5)$-torus knot, which is a real morsified curve of the singularity of $f(x,y)=x^3-y^5$. The right one does not come from a singularity. The link of this divide is $10_{139}$.}
\label{fig:divides}
\end{figure}

One may wonder how a Milnor fiber is embedded in the Milnor ball.
It is possible to guess the position sensuously, but it is not easy to describe it concretely.
In this paper, we use Turaev's shadow~\cite{Tur92, Tur94} to explain how the fiber surface is embedded.
Let $W$ be a compact, oriented, smooth $4$-manifold with boundary $M$ and $L$ be a link in $M$.
A shadow $X$ of $(W,L)$ is a simple polyhedron obtained from $W$ by collapsing
with keeping the link $L$.
Conversely, if the shadow $X$ is given then there exists an assignment $\gl$ of half integers to regions of $X$, called a {\it gleam}, such that the pair $(W,L)$ is recovered from $(X, \gl)$ uniquely.
This method is called {\it Turaev's reconstruction}.

Let $P$ be an admissible divide on a compact orientable surface $\Sigma_{g,n}$ of genus $g$ and with $n$ boundary components.
The admissibility condition is needed to have the structure of a Lefschetz fibration in the total space, see Section~2 for the definition of an admissible divide.
Now we double the curve of $P$ as follows (see Figure~\ref{fig:doubling}):
\begin{itemize}
\item[1.] double the curve of $P$;
\item[2.] for each endpoint of $P$, close the corresponding two endpoints of the doubled curve by a small half circle;
\item[3.] for each edge of $P$ that is not adjacent to an endpoint, add a crossing between the  two edges of the doubled curve parallel to the edge.
\end{itemize}
The obtained doubled curve is a divide and we denote it by $P_2$.
Note that this doubling method is similar to the one introduced in~\cite{GI02a}, but here
we add a crossing in the middle of each edge of $P$.
Let $(X_P, \gl_P)$ be a shadowed polyhedron obtained from $\Sigma_{g,n}$ by attaching 
an annulus along 
one of the boundary components to each immersed circle of 
$P_2$ and assigning a gleam $\gl_P$ to internal regions as follows:
\begin{itemize}
\item[4.] assign $\frac{1}{2}$ to each of the two triangular regions corresponding to an edge of $P$ not adjacent to an endpoint;
\item[5.] assign $0$ to the bigon corresponding to an endpoint of $P$;
\item[6.] assign $-1$ to the remaining internal regions.
\end{itemize}
We call  $(X_P, \gl_P)$ a {\it shadowed polyhedron of $P$}.

\begin{figure}[htbp]
\includegraphics[scale=0.7]{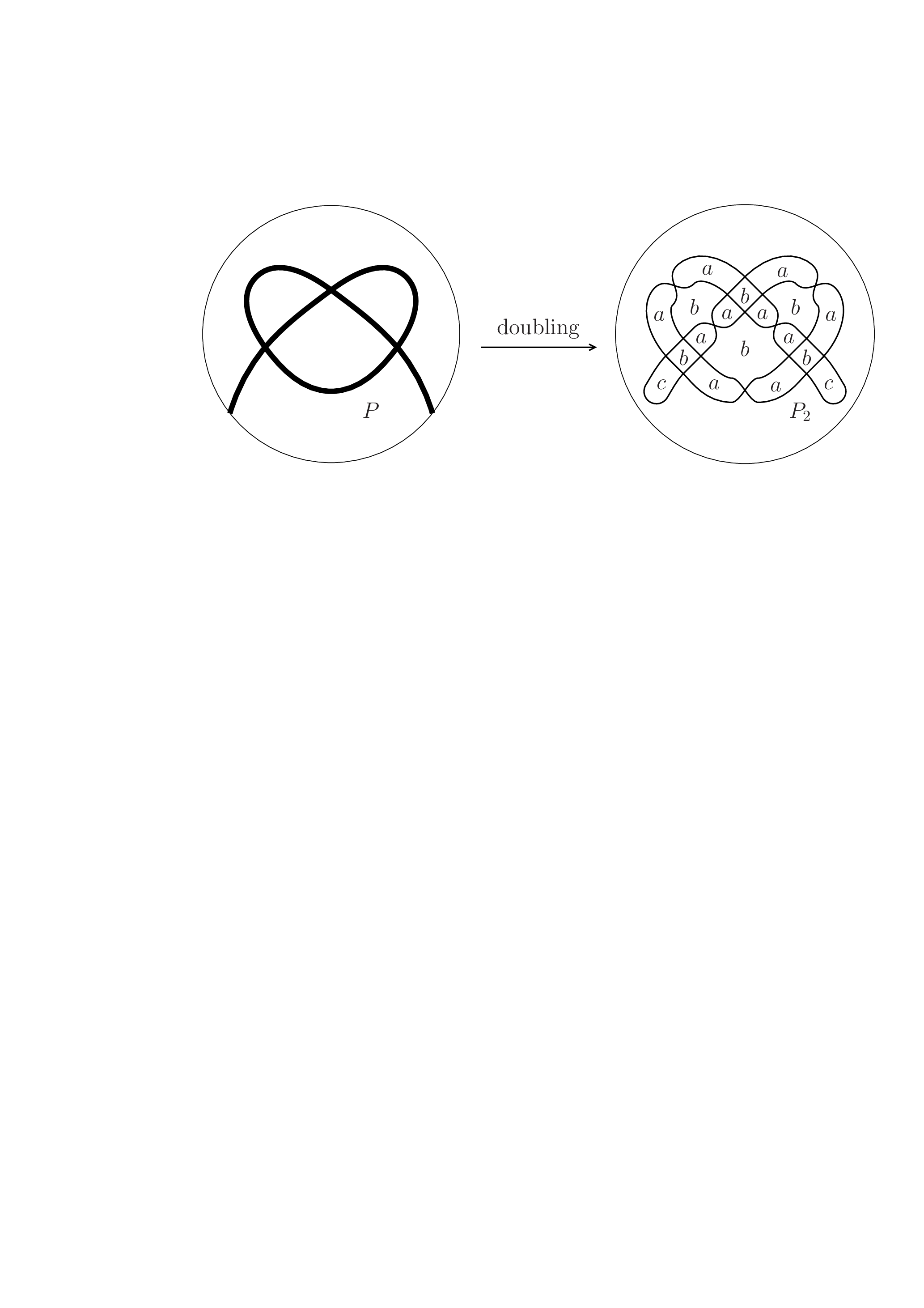}
\caption{The figure on the right represents the shadowed polyhedron of the divide on the left. The polyhedron is obtained by attaching an annulus along the doubled curve. The gleams of the internal regions labeled $a$, $b$, $c$ are $\frac{1}{2}$, $-1$, $0$, respectively.}
\label{fig:doubling}
\end{figure}

In a specific case, a shadowed polyhedron has the structure of a Lefschetz fibration canonically.

\begin{definition}
Let $(X,\gl)$ be a shadowed polyhedron.
If there exist a sub-polyhedron $X'$ of $X$ and ordered disk regions $D_1,\ldots,D_n\subset X'$ such that
\begin{itemize}
\item[(i)] $X$ collapses onto $X'$, so that the gleam $\gl'$ of $X'$ is induced from $\gl$,
\item[(ii)] 
$\partial D_i$ and $\partial D_j$ ($i\ne j$) intersect only at true vertices, 
\item[(iii)] $X'\setminus\bigl(D_1\cup\cdots\cup D_n \bigr)$ is homeomorphic to a compact, orientable surface $\Sigma$,
\item[(iv)] there exists an orientation on $\Sigma$ such that the gleam given as the sums of local contributions around crossing points of $\partial D_i$ and $\partial D_j$ with $i<j$ shown in Figure~\ref{fig:local_contribute2} coincides with the gleam $\gl'$ on each internal region of $X'$ contained in $\Sigma$, and
\item[(v)] for each $i=1,\ldots, n$, the gleam $\gl'$ of the region $D_i$ is $-1$, 
\end{itemize}
then the tuple $\mathcal{X}=(X';D_1,\ldots,D_n)$ is called an {\it LF-structure} on $(X,\gl)$. 
\end{definition}

\begin{figure}[htbp]
\includegraphics[scale=0.75]{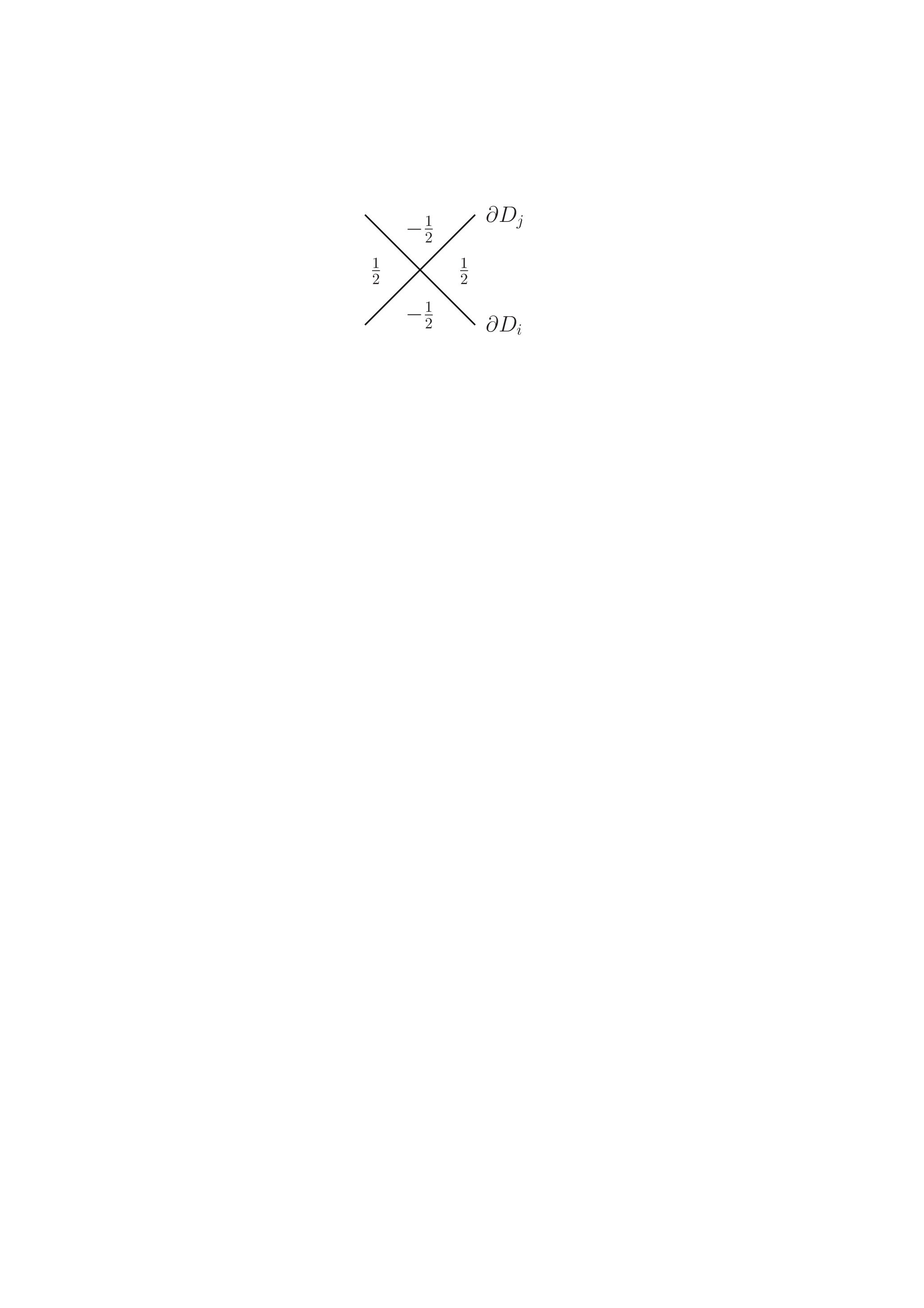}
\caption{Local contribution around a crossing of $\partial D_i$ and $\partial D_j$ with $i<j$.}
\label{fig:local_contribute2}
\end{figure}

If a shadowed polyhedron $(X,\gl)$ has an LF-structure then the corresponding $4$-manifold $W(X,\gl)$ has the structure of a Lefschetz fibration whose regular fiber is $\Sigma$ and singular points correspond to the internal regions $D_1,\ldots,D_n$.
Conversely, a Lefschetz fibration is constructed from $\Sigma\times D^2$, where $D^2$ is a $2$-disk, by attaching $2$-handles along disjoint simple closed curves on fibers over $\partial D^2$ with surface framing minus $1$.
Hence the polyhedron $X$ obtained from $\Sigma$ by attaching the cores of the $2$-handles with gleam $-1$ is a shadow of the total space of the Lefschetz fibration and we may assign a suitable gleam $\gl$ to the internal regions on $\Sigma$ such that $(X,\gl)$ has an LF-structure.
It will be proved in Lemma~\ref{LF-property} that the shadowed polyhedron $(X_P, \gl_P)$ of an admissible divide $P$  has an LF-structure $\mathcal{X}_P$.

The main theorem of this paper is the following.

\begin{theorem}\label{thm1}
Let $(X_P, \gl_P)$ be the shadowed polyhedron of an admissible divide $P$.
Then the Lefschetz fibration of $\mathcal{X}_P$ on $(X_P, \gl_P)$ coincides with that of $P$.
\end{theorem}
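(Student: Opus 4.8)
The plan is to compare the two Lefschetz fibrations through their defining combinatorial data. Over the disk $D^2$, a Lefschetz fibration is determined, up to isomorphism, by its regular fiber together with the ordered system of vanishing cycles carried on that fiber, i.e.\ its monodromy factorization into right-handed Dehn twists; so if the fibers agree and the ordered systems of vanishing cycles agree under a diffeomorphism of fibers, the fibrations are isomorphic. As recalled after the definition of the LF-structure, the LF-structure $\mathcal{X}_P=(X'_P;D_1,\ldots,D_n)$ supplied by Lemma~\ref{LF-property} endows $W(X_P,\gl_P)$ with a Lefschetz fibration whose regular fiber is the compact orientable surface $\Sigma=X'_P\setminus(D_1\cup\cdots\cup D_n)$ (condition~(iii)) and whose vanishing cycles are $\partial D_1,\ldots,\partial D_n$, each equipped with the Lefschetz framing, i.e.\ surface framing minus one (condition~(v)), ordered as $D_1,\ldots,D_n$. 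Hence it suffices to exhibit an orientation-preserving diffeomorphism from $\Sigma$ onto the regular fiber $F_P$ of the Lefschetz fibration of $P$ carrying the ordered system $(\partial D_1,\ldots,\partial D_n)$ to the ordered vanishing cycles of $P$.

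First I would recall from~\cite{Ishi04} the explicit description of the Lefschetz fibration of $P$: its fiber $F_P$, and its vanishing cycles, which are indexed by the double points of $P$ and ordered according to the real (Morse-theoretic) structure underlying the divide. I would then match the two index sets by analysing the polyhedron $X_P$ and its collapse to $X'_P$ locally. The doubling operation producing $P_2$ creates the three classes of regions of steps~4--6 — triangular $\tfrac12$-regions at the crossings introduced in the middle of the edges, $0$-bigons at the endpoints, and the remaining internal regions of gleam $-1$ — and the collapse of condition~(i) reduces these to the sub-polyhedron $X'_P$. The disks $D_1,\ldots,D_n$, which carry gleam $-1$ by condition~(v), are the disk regions surviving in $X'_P$, and I would show that they are in canonical bijection with the double points of $P$, so that $n$ equals the number of double points and $\Sigma$ is obtained from the surviving surface part of $X'_P$ by deleting the $D_i$.

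With the bijection established I would construct the diffeomorphism $\Sigma\to F_P$ from local models. In a neighbourhood of each double point $v_i$ of $P$, the doubled curve $P_2$, the attached annulus, and the adjacent $\tfrac12$- and $0$-regions form a standard picture, and the task is to check that $\partial D_i$ is isotopic on $\Sigma$ to the vanishing cycle of $P$ at $v_i$. Here conditions~(iv) and~(v) do the quantitative work: comparing the gleam $\gl'$ with the sum of the local contributions at the crossings of Figure~\ref{fig:local_contribute2} forces the mutual intersections of the $\partial D_i$ at true vertices and their framings to agree with the intersection and framing data of the divide's vanishing cycles, while the orientation on $\Sigma$ fixed in~(iv) matches right-handed twists with right-handed twists. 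Finally I would verify that the order $D_1,\ldots,D_n$ recorded by the LF-structure reproduces the order of the singular fibers of $P$.

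The main obstacle is precisely this geometric identification: pinning down how the fiber $\Sigma$ sits inside the shadow and checking, double point by double point, that $\partial D_i$ is the prescribed vanishing cycle, all while controlling the global gluing and the ordering. Reconciling the three gleam values $\tfrac12$, $0$ and $-1$ with A'Campo's local model at a singularity, through the local-contribution formula of condition~(iv), is the technical heart of the argument; and the bookkeeping of the order — which is what separates an isomorphism of Lefschetz fibrations from a mere diffeomorphism of total spaces — is the most delicate point to get right.
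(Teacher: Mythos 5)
Your reduction to comparing regular fibers and ordered vanishing-cycle systems is a legitimate framework, but the combinatorial identification on which your whole plan rests is incorrect. You assert that the disks $D_1,\ldots,D_n$ of the LF-structure are in canonical bijection with the double points of $P$, and that the vanishing cycles of the Lefschetz fibration of $P$ are indexed by the double points. In fact, by the construction in Lemma~\ref{LF-property}, the $D_i$ are the internal regions of $X_P'$ corresponding to \emph{all} the critical points of the Morse function $f_P$: besides the small square regions of $P_2$ surrounding the double points (the saddles of $f_P$), they include the shrunk copies of the inside regions of $P$ (the maxima and minima), which also receive gleam $-1$ in step~6 of the construction. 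Correspondingly, the Lefschetz fibration of $P$ has two kinds of critical points: the Morse singularities on the singular fiber $F_P^{-1}(0)$, one per double point, and the critical points created by attaching the $2$-handles $R_i'\times[0,1]$ along the inside regions with fiber framing $-1$, one per inside region; the monodromy is the product of right-handed Dehn twists in the order maxima, saddles, minima. Your bijection therefore undercounts the vanishing cycles: already for the simplest admissible divide in the disk (one double point, one inside region, whose link is the trefoil) the fibration has two vanishing cycles on a genus-one fiber, while your proposal produces one. Any argument built on ``$n$ equals the number of double points'' fails at this point.

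Separately, even with the index set corrected, your proposal postpones exactly the step that constitutes the proof. The paper does not match abstract monodromy data; it embeds $X_P$ into $N(\Sigma_{g,n})$ directly: starting from the non-simple polyhedron $\hat X_P$ (the union of $\Sigma_{g,n}$ with the annuli $R_+(q_i)$, $R_-(q_i)$ of tangent directions, which contains the singular fiber), it perturbs $\hat X_P$ into the simple polyhedron $X_{\oPP}$, observes that the resulting shadowed polyhedron is exactly $(X_P,\gl_P)$ (so Turaev's reconstruction returns $N(\Sigma_{g,n})$ with $X_P$ embedded in it), and then isotopes the regular fiber $F_P^{-1}(t)\cap N(\Sigma_{g,n})$ onto the surface $\Sigma\subset X_P$ by a local argument near each double point (two annuli in a $4$-ball with isotopic boundary links are isotopic). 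After this, both fibrations are obtained from a neighborhood of $\Sigma$ by attaching the same $2$-handles along the same curves with the same framing and in the same order, so they coincide. In your write-up the fiber identification and the cycle-by-cycle matching are acknowledged as ``the technical heart'' but no argument is given for them, so the proposal, as it stands, rests on a false bijection and omits the substantive geometric step.
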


The fiber surface of $P$ is the surface embedded in $X_P$ and bounded by
$\partial X_P\setminus \partial{\Sigma_{g,n}}$.
An advantage of $X_P$ is that we can see both of the fiber surface and the surface $\Sigma_{g,n}$ in the polyhedron. In the case of the Milnor fibration, the latter corresponds to the real plane in the Milnor ball.
By recovering the total space according to the gleam $\gl_P$, we may understand precisely how the fiber surface is embedded in the Milnor ball with respect to this real plane.

The detection of the structure of a Lefschetz fibration by an LF-structure can also be used for certain free divides.
A free divide is a divide whose endpoints are not necessary on the boundary of the unit disk. 
It was introduced by Gibson and the first author in~\cite{GI02b}, where they defined links associated with free divides and studied their properties.
In a special case, we can show that a free divide has an LF-structure and has a structure of a Lefschetz fibration. 
An endpoint of a free divide is called a {\it free endpoint} if the region adjacent to the endpoint is bounded by the curve of the free divide.

\begin{theorem}\label{thm2}
Let $Q$ be a free divide in the unit disk $D$ consisting of one immersed interval and with one free endpoint. Starting at the free endpoint along $Q$, let $c$ be the double point of $Q$ met first.
Assume either
\begin{itemize}
\item[(1)] $c$ is on the boundary of the region adjacent to the boundary of $D$, or 
\item[(2)] the immersed arc on $Q$ connecting $c$ and the non-free endpoint passes exactly one double point.
\end{itemize}
\noindent
Then the link of $Q$ is fibered and the fibration is obtained as the boundary of a Lefschetz fibration. In particular, its monodromy is positive.
\end{theorem}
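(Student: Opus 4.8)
The plan is to transport the LF-structure machinery built for admissible divides to the free divide $Q$, the point being that once a shadowed polyhedron carrying an LF-structure is produced, the Lefschetz fibration and the positivity of the monodromy come for free. First I would apply the doubling recipe (steps 1--6) to $Q$ to obtain the doubled curve $Q_2$ and an associated shadowed polyhedron $(X_Q,\gl_Q)$. Since $Q$ is a single immersed interval whose two endpoints are both capped by half circles in step~2, the doubled curve $Q_2$ is a single immersed circle, so $X_Q$ is built by attaching an annulus along $Q_2$ and the reconstructed manifold satisfies $\partial W(X_Q,\gl_Q)=S^3$. The only new phenomenon relative to the admissible case is the free endpoint: because the region adjacent to it is bounded by the curve of $Q$, closing the two doubled strands there produces an \emph{internal} disk region rather than the harmless bigon of step~5 that meets $\partial\Sigma$. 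I would record the local gleam of this disk together with those of the triangular and bigon regions created near it.

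Next I would exhibit an LF-structure $\mathcal{X}_Q=(X_Q';D_1,\ldots,D_n)$ on $(X_Q,\gl_Q)$, with one disk region $D_i$ for each double point of $Q$ and with fiber surface $\Sigma=X_Q'\setminus(D_1\cup\cdots\cup D_n)$. Conditions (i)--(iii) and (v) are local and are checked exactly as in Lemma~\ref{LF-property}; the substantive issue is the gleam condition (iv) near the free endpoint, and this is precisely where the hypotheses enter. Under hypothesis~(1), the first double point $c$ lies on the boundary of the outer region, so the half-circle disk at the free endpoint can be collapsed toward $\partial D$, pushing the free endpoint into the standard boundary configuration of an ordinary divide; after this collapse the gleam bookkeeping of Lemma~\ref{LF-property} applies verbatim. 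Under hypothesis~(2), the arc from $c$ to the non-free endpoint crosses exactly one double point, so the neighborhood of the free endpoint reduces to a single explicit local model, for which I would verify (iv) directly by summing the local contributions of Figure~\ref{fig:local_contribute2} and matching them against the assigned gleams. In both cases one obtains that $X_Q$ collapses onto $X_Q'$ with the induced gleam satisfying (i)--(v).

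Granting the LF-structure, the general reconstruction (the paragraph following the definition of an LF-structure) shows that $W(X_Q,\gl_Q)$ carries a Lefschetz fibration over $D^2$ with regular fiber $\Sigma$ and with one Lefschetz singularity per disk $D_i$. Since every $\gl'(D_i)=-1$, each vanishing cycle is attached with surface framing $-1$, so the corresponding monodromy factor is a right-handed Dehn twist; the boundary of the fibration is therefore an open book on $S^3$ whose page is $\Sigma$ and whose monodromy is a product of right-handed Dehn twists, hence positive. It remains to identify this open book with the fibration of the link of $Q$: arguing as in the proof of Theorem~\ref{thm1}, I would show that the fiber surface embedded in $X_Q$ and bounded by $\partial X_Q\setminus\partial\Sigma$ is the fiber surface of $Q$ in the sense of~\cite{GI02b}, so that the binding recovers the link of $Q$. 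This gives all three assertions: the link of $Q$ is fibered, its fibration is the boundary of the Lefschetz fibration of $\mathcal{X}_Q$, and its monodromy is positive.

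The main obstacle is the free endpoint in the second step. For an admissible divide every endpoint sits on $\partial D$ and contributes the innocuous bigon of step~5, whereas the bounded region at a free endpoint produces an extra internal disk that a priori violates the gleam condition~(iv). Hypotheses~(1) and~(2) are exactly the two combinatorial situations in which this disk can be removed by a collapse (case~1) or absorbed into a small, explicitly checkable local model (case~2); the delicate part is to confirm that in each case the collapse is compatible with the orientation on $\Sigma$ and with the gleam data, so that condition~(iv) holds on every internal region, including those adjacent to the free endpoint.
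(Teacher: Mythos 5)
Your overall strategy (double the curve, build a shadowed polyhedron, exhibit an LF-structure, invoke the general reconstruction) is the same as the paper's, and you correctly isolate the difficulty: the free endpoint produces an internal region that breaks the bookkeeping of Lemma~\ref{LF-property}. But the two steps you propose to resolve it are exactly where the gap lies, and neither works as stated. In case~(1), you cannot ``collapse'' the disk at the free endpoint: it is an internal region, so it has no free face and polyhedral collapsing is not available there; moreover, if your move really pushed the free endpoint ``into the standard boundary configuration of an ordinary divide,'' then $L(Q)$ would be the link of an ordinary divide, which is false in general --- the paper points out that $10_{154}$ and $10_{161}$, which satisfy the hypotheses of the theorem, are neither closed positive braids nor links of divides. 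What the paper actually does is different: it \emph{modifies the doubling itself} (no crossing is added along the edge $e$ adjacent to $c$, the outside region and the free-endpoint region; in case~(2) a second edge $e'$ is also exempted), then deforms the doubled curve \emph{as an oriented divide} --- a deformation that preserves the link of the oriented divide, and this is precisely where hypothesis~(1) enters --- and the resulting LF-structure acquires an extra vanishing cycle coming from a bigon that must be ``regarded as a saddle.'' The bookkeeping is therefore not ``verbatim'' that of Lemma~\ref{LF-property}: there is a genuinely new Dehn twist in the monodromy with no counterpart in the ordinary divide picture.

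In case~(2) your plan omits the key point: an LF-structure requires an \emph{ordering} of the disk regions $D_1,\ldots,D_n$, and condition~(iv) depends on that ordering, since the local contributions of Figure~\ref{fig:local_contribute2} are prescribed for crossings of $\partial D_i$ and $\partial D_j$ with $i<j$. Near the free endpoint the standard order (maxima, then saddles, then minima) does not satisfy~(iv); the paper must single out three cycles $C_1,C_2,C_3$, regard $C_1$ and $C_3$ as maxima, and insert the middle region in the order $R_1<R_2<R_3$ to make the gleam sums come out right. ``Summing the local contributions and matching them against the assigned gleams'' is not a routine verification until such an ordering is specified, and with the unmodified doubling you start from (a crossing on every edge not adjacent to an endpoint) it is not clear that any ordering works at all. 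So the proposal correctly identifies the obstacle but does not supply the mechanism --- the modified doubling, the link-preserving deformation of the oriented doubled curve, and the non-standard ordering of vanishing cycles --- by which the paper overcomes it.
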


Here a monodromy is said to be {\it positive} if it is represented as a product of right-handed Dehn twists.

Among the free divides listed in~\cite{GI02b}, for example, the links of $3_{17}$ and $3_{18}$ are not fibered. Actually, they do not satisfy the assumption in Theorem~\ref{thm2}.
In the list, there are two knots, up to $10$ crossings, that are represented by free divides with one free endpoint, satisfy the conditions in Theorem~\ref{thm2} and neither closed positive braids nor links of divides, which are $10_{154}$ and $10_{161}$.

\begin{corollary}\label{cor3}
The fibered knots $10_{154}$ and $10_{161}$ are obtained as the boundaries of Lefschetz fibrations. In particular, their monodromies are positive.
\end{corollary}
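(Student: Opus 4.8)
The plan is to deduce the corollary directly from Theorem~\ref{thm2} by exhibiting, for each of the two knots, an explicit free-divide representative satisfying the theorem's hypotheses. Concretely, I would first reproduce from the Gibson--Ishikawa table in~\cite{GI02b} the free divides $Q$ whose associated links are $10_{154}$ and $10_{161}$, and confirm in each case that $Q$ is a single immersed interval in the unit disk $D$ with exactly one free endpoint (its other endpoint lying on $\partial D$). The identification of these diagrams with the knot types $10_{154}$ and $10_{161}$ is already carried out in~\cite{GI02b}, so at this stage I would simply cite that correspondence rather than recompute the link from scratch.

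With the two diagrams fixed, the core of the argument is a direct diagrammatic verification that each $Q$ meets one of the two alternatives in Theorem~\ref{thm2}. For this I would traverse $Q$ starting from its free endpoint, locate the first double point $c$ encountered, and then check: either $c$ lies on the boundary of the region of $D\setminus Q$ adjacent to $\partial D$ (alternative~(1)), or, failing that, count the double points lying on the immersed subarc of $Q$ joining $c$ to the non-free endpoint and confirm that this count is exactly one (alternative~(2)). This is a finite, purely combinatorial inspection of the two specific divides, and by construction (these knots were selected precisely because they satisfy the conditions) at least one of (1), (2) holds for each. Once the relevant alternative is verified, Theorem~\ref{thm2} applies verbatim to give that the link of $Q$ is fibered, that the fibration is realized as the boundary of a Lefschetz fibration, and that its monodromy is a product of right-handed Dehn twists, hence positive. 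This proves both assertions of the corollary.

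The only genuine subtlety, and the step I would treat most carefully, is making sure the free-divide diagram drawn really represents $10_{154}$ (respectively $10_{161}$) and not some other knot of the same crossing number; I would guard against this by pinning the identification to~\cite{GI02b} and, if desired, cross-checking a classical invariant such as the Alexander polynomial or signature against standard knot tables. The remaining assertion implicit in the surrounding discussion---that these two knots are \emph{not} already accounted for as closed positive braids or as links of ordinary divides---is not needed for the truth of the corollary itself (it only explains why the examples are of interest), and so I would relegate it to the preceding remark rather than include it in the proof. With the hypothesis check completed, the proof closes by a single application of Theorem~\ref{thm2} to each of the two divides.
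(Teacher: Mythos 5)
Your proposal matches the paper's approach exactly: the paper derives Corollary~\ref{cor3} immediately from Theorem~\ref{thm2}, noting (in the paragraph preceding the corollary) that the free divides representing $10_{154}$ and $10_{161}$ in the Gibson--Ishikawa list~\cite{GI02b} are single immersed intervals with one free endpoint satisfying the hypotheses of the theorem. The paper offers no further argument beyond this citation-plus-verification, so your more careful spelling-out of the diagrammatic check and the knot-type identification is simply a fleshed-out version of the same proof.
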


As we mentioned, the link of a divide has the structure of a Lefschetz fibration. A closed positive braid also has this property, which follows from the ``anthology'' in~\cite{NR87} and the fact that it can be constructed by successive Murasugi-sum's of torus links of type $(2,k)$.
One can see that the fiber surfaces of  $10_{154}$ and $10_{161}$ are obtained by plumbing positive Hopf bands. Hence Corollary~\ref{cor3} also follows from ``anthology'' and plumbings.

The relation between divides and shadows was suggested by Professor Norbert A'Campo when the first author was a student in Universit\"{a}t Basel though he, the first author, could not catch the point at that time. 
The authors would like to thank him for introducing them to these two interesting topics.
They are also grateful to Burak \"{O}zba\u{g}ci for telling us the orientation issue of the bundle in~\cite{Ishi04}, Mikami Hirasawa for telling us about Hopf plumbings of $10_{154}$ and $10_{161}$, and Seiichi Kamada and Yuya Koda for precious comments.
The first author is supported by the Grant-in-Aid for Scientific Research (C), JSPS KAKENHI Grant Number 16K05140. 
The second author is supported by the Grant-in-Aid for Research Activity start-up, JSPS KAKENHI Grant Number 18H05827.
This work is supported by the Grant-in-Aid for Scientific Research (S), JSPS KAKENHI Grant Number 17H06128.

\section{Preliminaries}

In this paper, $\partial A$ means the boundary of a topological space $A$ and,
for topological spaces $A$ and $B$ with $A\subset B$, 
$\Nbd(A;B)$ means a small compact neighborhood of $A$ in $B$.

\subsection{A'Campo's divide}\label{sec:2.1}

Let $\Sigma_{g,n}$ be a compact, orientable, smooth surface of genus $g$ and
with $n$ boundary components with an arbitrary Riemannian metric, where $g,n\geq 0$.

\begin{definition}\label{dfn21}
A {\it divide} $P$ in $\Sigma_{g,n}$ is the image of a generic and relative immersion of 
a finite number of copies of the unit interval or the unit circle into
$\Sigma_{g,n}$. The generic condition is the following:
\begin{itemize}
   \item the image has neither self-tangent points nor triple points;
   \item 
an immersed interval intersects $\bd\Sigma_{g,n}$ at the endpoints transversely;
   \item an immersed circle does not intersect $\bd\Sigma_{g,n}$.
\end{itemize}
\end{definition}

If $\Sigma_{g,n}$ is closed then we set $N(\Sigma_{g,n})=\Nbd(\Sigma_{g,n};T(\Sigma_{g,n}))$,
where $T(\Sigma_{g,n})$ is the total space of the tangent bundle of $\Sigma_{g,n}$.

If $\Sigma_{g,n}$ has boundary, we define $N(\Sigma_{g,n})$ as follows:
Set $A=\Nbd(\partial \Sigma_{g,n};\Sigma_{g,n})$
and $B=\Sigma_{g,n}\setminus A$.
First thicken $B$ in $T(\Sigma_{g,n})$ as
\[
   \hat B:=\{(x,u)\in T(\Sigma_{g,n})\mid x\in B, u\in T_x(\Sigma_{g,n}), \|u\|\leq\varepsilon\},
\]
where $T_x(\Sigma_{g,n})$ is the tangent space to $\Sigma_{g,n}$ at $x$ and $\varepsilon>0$. 
Next, set 
$\alpha:=\bd A\setminus\bd\Sigma_{g,n}$, which is the boundary of the annuli $A$ not contained in $\bd\Sigma_{g,n}$,
and choose a compact tubular neighborhood $\Nbd(\alpha;T(\Sigma_{g,n}))$ of $\alpha$
suitably such that the boundary of
$\Nbd(\alpha;T(\Sigma_{g,n}))\cup \hat B$ becomes a smooth $3$-manifold.
Then we define $N(\Sigma_{g,n})= \Nbd(\alpha;T(\Sigma_{g,n}))\cup \hat B$.
Note that $\partial N(\Sigma_{g,n})$ is diffeomorphic to a connected sum of $2g+n-1$ copies of $S^2\times S^1$ if $n\geq 1$. In particular, it is $S^3$ if $g=0$ and $n=1$.

\begin{definition}\label{dfn41}
The {\it link} of a divide $P$ in $\Sigma_{g,n}$ is the set $L(P)$
defined by
\[
    L(P):=\{(x,u)\in\partial N(\Sigma_{g,n})\mid x\in P,\;\, u\in T_x(P)\},
\]
where $T_x(P)$ is the set of tangent vectors to $P$ at $x$.
\end{definition}

To be precise, as mentioned in the footnote of the first page, we need to reverse the orientation of $N(\Sigma_{g,n})$, or equivalently, need to replace the tangent bundle of $\Sigma_{g,n}$ in the above construction with the cotangent bundle.

Each connected component of $\Sigma_{g,n}\setminus P$ is called a {\it region} of $P$.
If a region of $P$ is bounded by $P$ then it is called
an {\it inside region}, and otherwise it is called an {\it outside region}.

\begin{definition}\label{dfn22}
A divide $P$ in $\Sigma_{g,n}$ is {\it admissible} if it satisfies the following:
\begin{itemize}
   \item $P$ is connected;
   \item each inside region of $P$ is simply connected;
   \item each outside region of $P$ is either simply connected or
an annulus such that one boundary component is a component of $\bd\Sigma_{g,n}$
and the other is contained in $P$;
   \item each component of $\bd\Sigma_{g,n}$ either does not intersect $P$ or intersects $P$ at an even number of points transversely;
   \item each circle component of $P$ intersects the other components
of $P$ at an even number of points transversely.
\end{itemize}
\end{definition}

In the case where $g=0$ and $n=1$ (i.e., $\Sigma_{g,n}$ is a disk),
a divide $P$ is admissible if and only if it is connected.
In~\cite{AC98}, A'Campo proved that if $P$ in $\Sigma_{0,1}$ is connected then
$L(P)$ is fibered with positive monodromy.
The admissibility condition was introduced in~\cite{Ishi04} to inherit this fiberedness property
to the general setting.

\begin{theorem}[Ishikawa~\cite{Ishi04}]
If a divide $P$ is admissible then $L(P)$ is a fibered link in $\partial N(\Sigma_{g,n})$ with positive monodromy.
\end{theorem}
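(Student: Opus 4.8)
The plan is to realize the link $L(P)$ as the binding of an explicit open book, following A'Campo's construction for the disk \cite{AC98} and using admissibility to carry it over to $\Sigma_{g,n}$. First I would build a \emph{divide function}: a smooth map $f_P\colon\Sigma_{g,n}\to\Real$ with $f_P^{-1}(0)=P$ for which $0$ is a regular value away from the double points of $P$, at each of which $f_P$ has a nondegenerate saddle. The existence of such an $f_P$ amounts to a checkerboard coloring of the regions of $P$ by the sign of $f_P$, and this is exactly where the admissibility hypotheses enter. The two evenness conditions (each boundary component, and each circle component of $P$, meets the rest of $P$ in an even number of points) guarantee that crossing an edge of $P$ always reverses the sign, so a global two-coloring exists; the simple-connectivity of the inside regions and the prescribed form of the outside regions then let me place the remaining critical points (local extrema) in region interiors and make $f_P$ standard near $\partial\Sigma_{g,n}$.

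Second, on $N(\Sigma_{g,n})$ I would consider $\Phi(x,u)=f_P(x)+i\,df_P(x)(u)$, where $u$ is the (co)tangent coordinate. Its zero locus on $\partial N(\Sigma_{g,n})$ is precisely $\{x\in P,\ u\in T_x(P)\}=L(P)$, since $f_P(x)=0$ forces $x\in P$ and $df_P(x)(u)=0$ forces $u\in\ker df_P(x)=T_x(P)$. I would then show that $\pi=\Phi/|\Phi|\colon\partial N(\Sigma_{g,n})\setminus L(P)\to S^1$ is a locally trivial fibration by checking the Milnor condition locally: over a smooth arc of $P$ the model is the standard fibration of a parallel pair of strands, while over a double point the model is the Milnor fibration of the node $xy$ (the $A_1$ singularity). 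Connectivity of $P$ then yields a connected fiber, so $L(P)$ is fibered.

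Third, for positivity I would track the local monodromy at each node. Each double point contributes a single vanishing cycle, and the local monodromy of the $A_1$ Milnor fibration is a single Dehn twist whose handedness is fixed by the orientation of the ambient $4$-manifold. Here the orientation must be chosen so that $N(\Sigma_{g,n})$ is the unit disk bundle in the \emph{cotangent} bundle (the point flagged in the footnote); with this choice each twist is right-handed, and the total monodromy, being the ordered product of these twists over the double points of $P$, is positive.

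The main obstacle I expect is the first step: proving that admissibility is exactly the hypothesis needed to produce a globally consistent divide function $f_P$ on a surface of arbitrary genus with boundary. On the disk every connected divide is automatically checkerboard colorable, but on $\Sigma_{g,n}$ both the colorability and the correct behavior of $f_P$ near $\partial\Sigma_{g,n}$ fail in general, and one must show that the five admissibility conditions together remove precisely these obstructions. Verifying the handedness of each Dehn twist under the cotangent orientation is the second delicate point, since it is what upgrades ``fibered'' to ``fibered with positive monodromy.''
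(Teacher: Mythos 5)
There is a genuine gap in your second step, and it is precisely the point where A'Campo's construction (which the paper follows via~\cite{Ishi04} and~\cite{AC98}) differs from yours: the fibration map must carry a quadratic correction term. You set $\Phi(x,u)=f_P(x)+i\,df_P(x)(u)$ and claim its zero locus on $\partial N(\Sigma_{g,n})$ is $L(P)$ because ``$df_P(x)(u)=0$ forces $u\in\ker df_P(x)=T_x(P)$.'' This fails exactly at the double points of $P$: a double point $c$ is a saddle of $f_P$ (it must be a critical point, since $f_P^{-1}(0)=P$ has a crossing there), so $df_P(c)=0$ identically and $\ker df_P(c)$ is the whole tangent plane, not $T_c(P)$. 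Hence $\Phi$ vanishes on the entire circle $\{(c,u)\in\partial N(\Sigma_{g,n})\}$ over each double point, of which only the four tangent directions belong to $L(P)$; your map $\Phi/|\Phi|$ is therefore not even defined on the complement of $L(P)$, and no local model check can repair this. The function used in the paper, $F_P(x,u)=f_P(x)+i\,df_P(x)(u)-\frac{1}{2}\chi(x)H_{f_P}(x)(u,u)$, contains the Hessian term for exactly this reason: in Morse coordinates $f_P=x_1x_2$ at a crossing one gets $F_P=(x_1+iu_1)(x_2+iu_2)$, the complexified node, whose zero set meets the fiber circle only in the four points of $L(P)$ and which has an honest Lefschetz (nodal) singularity there.

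Even with the correction inserted, your plan proves less than claimed. Verifying Milnor-type local models near $L(P)$ only shows that $\arg F_P$ behaves like an open book near the binding; it says nothing about critical points of $\arg F_P$ away from $L(P)$ --- for instance, over the extremum of $f_P$ in each inside region, $F_P$ is real and nonzero on a whole circle of $\partial N(\Sigma_{g,n})$, and transversality there is a global issue. The paper's route avoids this: it restricts $F_P$ to $F_P^{-1}(D_\eta)\cap N(\Sigma_{g,n})$, which is a genuine Lefschetz fibration with a single singular fiber (one node per double point of $P$), and then recovers $N(\Sigma_{g,n})$ by attaching the pieces $R_i'\times[0,1]$ over the inside regions along curves with fiber-surface framing minus $1$; such attachments are Lefschetz $2$-handles, so the fibration extends and its boundary open book fibers $L(P)$. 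This also corrects your monodromy count: the monodromy is a product of right-handed Dehn twists indexed by the double points \emph{and} by the inside regions (the extrema of $f_P$), not by the double points alone; positivity still holds, but for that reason.
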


The fibration of the fibered link $L(P)$ is obtained as the boundary of a Lefschetz fibration.
We here explain how the Lefschetz fibration is obtained briefly. See~\cite{Ishi04} more precise explanation.

Let $f_P:\Sigma_{g,n}\to\Real$ be a Morse function on $\Sigma_{g,n}$ such that $f_P^{-1}(0)=P$ and each inside region of $P$ has exactly one singular point of $f_P$.
The existence of such a Morse function is guaranteed by the admissibility condition.
Define a map $F_P:T(\Sigma_{g,n})\to \Complex$ by
\[
   F_P(x,u)=f_P(x)+idf_P(x)(u)-\frac{1}{2}\chi(x)H_{f_P}(x)(u,u),
\]
where $i=\sqrt{-1}$, $x\in \Sigma_{g,n}$, $u\in T_x(\Sigma_{g,n})$, $H_{f_P}$ is the Hessian of $f_P$ and $\chi(x)$ is a bump function which is $0$ outside small neighborhoods of double points of $P$ and $1$ on smaller neighborhoods.

Now let $D_\eta$ be 
the disk in $\Complex$ centered at the origin with sufficiently small radius $\eta>0$ and restrict $F_P$ to $F_P^{-1}(D_\eta)\cap N(\Sigma_{g,n})$.
This is a Lefschetz fibration with only one singular fiber. Note that the number of Morse singularities on the singular fiber is same as the number of double points of $P$.
Let $R_1,\ldots,R_m$ be the inside regions of $P$ and $R_i'$ be the closure of 
$R_i\setminus F^{-1}_P(D_\eta)$ for $i=1,\ldots,m$.
The total space $N(\Sigma_{g,n})$ can be recovered, up to isotopy, from  $F_P^{-1}(D_\eta)\cap N(\Sigma_{g,n})$ by attaching $R'_1\times [0,1], \ldots, R'_m\times [0,1]$ along
the simple closed curves 
$\partial (F_P^{-1}(D_\eta))\cap (R_1'\cup\cdots\cup R_m')$.
We can check directly that the framings of these attachings are those of the fiber surface of the Lefschetz fibration minus $1$. Thus the Lefschetz fibration on  $F_P^{-1}(D_\eta)\cap N(\Sigma_{g,n})$ extends to $R'_1\times [0,1], \ldots, R'_m\times [0,1]$ after these attachings.
We call this fibration the {\it Lefschetz fibration of the admissible divide $P$}.

We here note known studies related to divides.
A divide is defined first in the unit disk by A'Campo~\cite{AC99,AC98}.
In this case, the link of a divide is defined in the unit sphere $S^3$.
He then proved that if a divide is connected then the link is fibered, if the divide is a real morsified curve of a complex plane curve singularity then its fibration is isomorphic to the Milnor fibration, and if a divide consists of only immersed intervals then the unknotting number of the link is equal to the number of double points. Furthermore, in~\cite{AC98b}, he proved that there are many links of divides that are hyperbolic. The link-types of the links of divides had been studied by Couture-Perron~\cite{CP00}, Hirasawa~\cite{Hir02}, Goda-Hirasawa-Yamada~\cite{GHY02} and Kawamura~\cite{Kaw02}. Some mysterious relation between divides and exceptional surgeries had been studied by Yamada~\cite{Yam06, Yam09}.
Recently, Fomin-Pylyavskyy-Shustin studied real morsified curves and divides using quivers~\cite{FPS17} and \"{O}zba\u{g}ci used divides on compact surfaces for constructing specific Lefschetz fibrations and open book decompositions~\cite{Ozb18}.

\subsection{Turaev's shadow}
If each point of a compact space $X$ has a neighborhood homeomorphic to one of (i)-(v) 
in Figure~\ref{fig:local_model}, then $X$ is called a \textit{simple polyhedron}. 
The set of points of type (ii), (iii) and (v) is called the \textit{singular set} of $X$ 
and denoted by $\Sing(X)$. 
A point of type (iii) is a \textit{true vertex}, 
and each connected component of $\Sing(X)$ 
with true vertices removed is called a \textit{triple line}. 
Each connected component of $X\setminus\Sing(X)$ is called a \textit{region}. 
Hence a region consists of points of type (i) or (iv). 
A region is called an \textit{internal region} if it contains no points of type (iv), 
and a \textit{boundary region} otherwise. 
The \textit{boundary} of $X$, denoted by $\partial X$, 
is defined as the set of points of type (iv) and (v). 

\begin{figure}[htbp]
\includegraphics[scale=0.6]{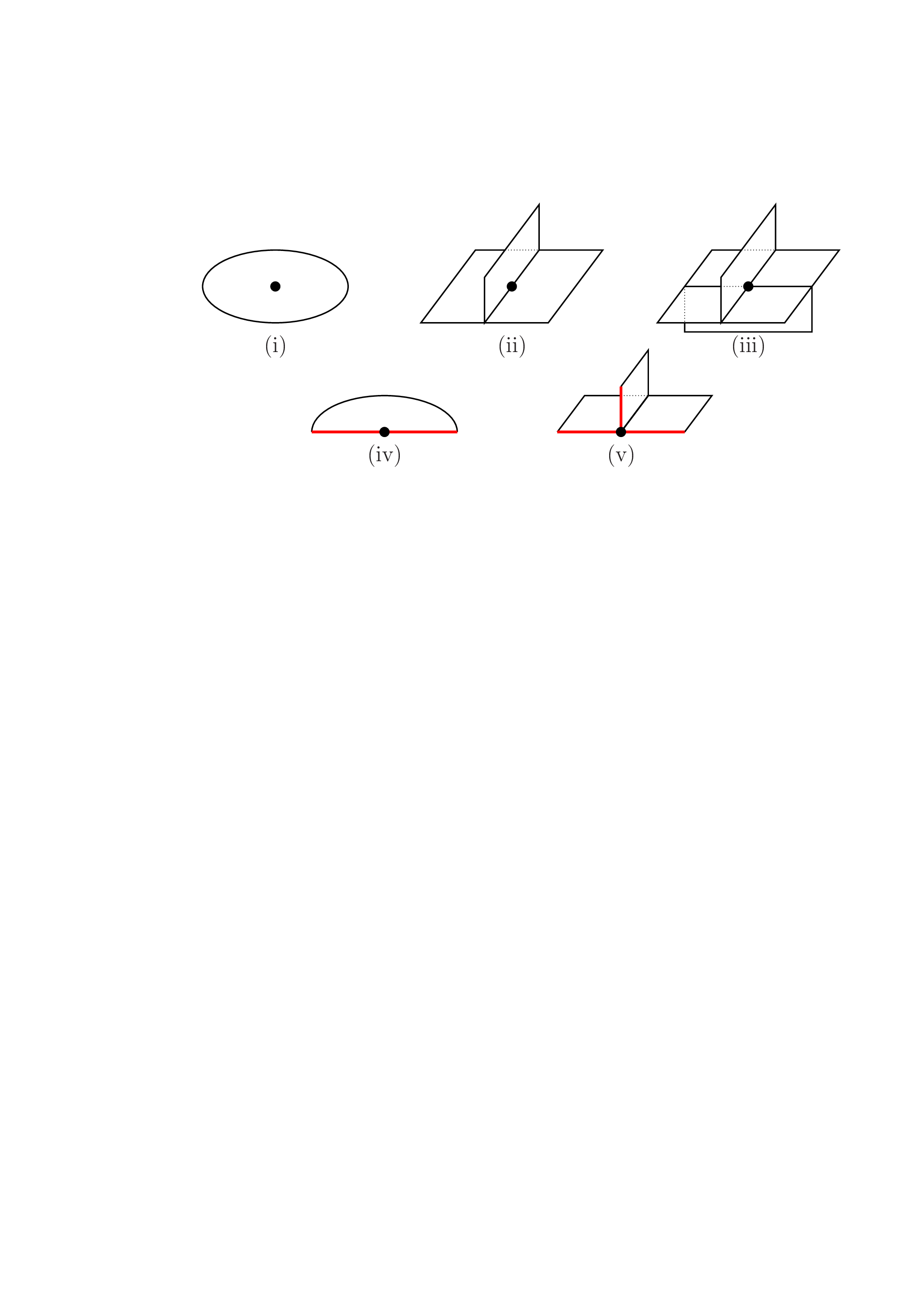}
\caption{The local models of a simple polyhedron.}
\label{fig:local_model}
\end{figure}

\begin{definition}
Let $W$ be a $4$-manifold with boundary 
and $X\subset W$ be a simple polyhedron that is proper and locally flat in $W$. 
If $W$ collapses onto $X$ after giving some triangulation to $(W,X)$, 
then the polyhedron $X$ is called a \textit{shadow} of $W$. 
\end{definition}

Here $X$ is said to be proper in $W$ if $X\cap\partial W=\partial X$ and
locally flat in $W$ if there is a local chart $(U,\varphi)$ around each point of $X$ 
such that $\varphi(U\cap X)$ is contained in $\Real^3\subset\Real^4=\varphi(U)$. 
It is easy to see that any handlebody consisting of $0$-, $1$- and $2$-handles 
admits a shadow~\cite{Tur94, Cos05}. 

For any simple polyhedron $X$, 
one can define the \textit{$\Integer_2$-gleam} on each internal region. 
Let $R$ be an internal region, and $i:F\to X$ be a continuous map 
extended from the inclusion of $R$, 
where $F$ is a compact surface whose interior is homeomorphic to $R$. 
Note that the restriction $i|_{\Int(F)}$ coincides with the inclusion of $R$, 
and that $i(\partial F)\subset\Sing(X)$. 
We now see that there exists a local homeomorphism $\tilde{i}:\tilde{F}\to X$ 
such that its image is a neighborhood of $i(F)$ in $X$, 
where $\tilde{F}$ is a simple polyhedron obtained from $F$ by 
attaching an annulus or a M\"obius strip along its core circle to each boundary component of $F$. 
Note that $\tilde{F}$ is determined up to homeomorphism from the topology of $X$. 
Here the $\Integer_2$-gleam $\mathfrak{gl}_2(R)$ of $R$ is defined to be $0$ 
if the number of the attached M\"obius strips is even, and $1$ otherwise. 

\begin{definition}
A \textit{gleam} on a simple polyhedron $X$ is a coloring for all the internal regions of $X$ 
suth that each value $\mathfrak{gl}(R)$ on an internal region $R$ satisfies $\mathfrak{gl}(R)-\frac{1}{2}\mathfrak{gl}_2(R)\in\Integer$. 
We call a pair $(X,\mathfrak{gl})$ a \textit{shadowed polyhedron}. 
\end{definition}
\begin{theorem}
[Turaev~\cite{Tur94}]
\begin{enumerate}
\item 
There exists a canonical way to construct a $4$-manifold $W$
from a given shadowed polyhedron $(X,\mathfrak{gl})$ such that $X$ is a shadow of $W$. 
This construction provides a smooth structure on $W$ uniquely. 
\item
For a $4$-manifold $W$ admitting a shadow $X$, 
there exists a gleam $\mathfrak{gl}$ on $X$ 
such that $W$ is diffeomorphic to the $4$-manifold constructed from the shadowed polyhedron $(X,\mathfrak{gl})$
according to the way of  {\rm (1)}. 
\end{enumerate}
\end{theorem}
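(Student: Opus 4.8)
The plan is to build the $4$-manifold $W$ by thickening $X$ one local model at a time and to recognize the gleam as the discrete gluing datum that is \emph{not} already forced by the combinatorics of the singular set. The guiding principle is that a neighborhood of the singular set $\Sing(X)$ admits an essentially canonical $4$-dimensional thickening, whereas over each internal region $R$ one must attach a $2$-disk bundle, and the choice of this bundle --- measured by a relative Euler number --- is exactly what $\gl(R)$ records.

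For part (1), first thicken each local model of Figure~\ref{fig:local_model}: a point of type (i) or (iv) lies in a surface and is thickened by the total space of its normal $\Real^2$-bundle, while the models of types (ii), (iii), (v) are thickened rigidly, since the combinatorics of the triple lines and true vertices leaves no freedom up to diffeomorphism. Gluing these over $\Nbd(\Sing(X);X)$ produces a $4$-manifold with corners together with a canonical framing of the normal directions of $X$ along $\partial R$ for every region $R$. Over a region $R$, realized as the interior of a compact surface $F$, I would then attach the total space of an $\Real^2$-bundle over $F$ whose restriction to $\partial F$ matches the already-fixed framing and whose relative Euler number (taking values in $\frac12\Integer$) equals $\gl(R)$; the condition $\gl(R)-\frac12\gl_2(R)\in\Integer$ is precisely what makes this possibly half-integral framing consistent with the orientability type of the normal bundle, i.e.\ with the parity of M\"obius bands recorded by $\gl_2(R)$. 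Because every local thickening is standard and every gluing is by a diffeomorphism canonical up to isotopy, the resulting $W$ is well defined up to diffeomorphism, and the same rigidity yields a unique smooth structure, giving the first assertion.

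For part (2), suppose $W$ collapses onto the proper, locally flat polyhedron $X$. Up to diffeomorphism $W$ is the regular neighborhood $\Nbd(X;W)$, which is assembled from exactly the local thickenings above, so it suffices to read off the gluing datum. For each internal region $R$ I would define $\gl(R)$ to be the relative Euler number of the normal $\Real^2$-bundle of $R$ in $W$, computed against the framing that the canonical thickening of $\Sing(X)$ induces along $\partial R$. Local flatness guarantees that this normal bundle exists and that its orientability type agrees with $\gl_2(R)$, so $\gl(R)-\frac12\gl_2(R)\in\Integer$ and $\gl$ is a genuine gleam. Applying the construction of part (1) to $(X,\gl)$ then reproduces the same local thickenings with the same gluings, hence a $4$-manifold diffeomorphic to $W$.

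The main obstacle is making the relative Euler number precise and checking that it is well defined independently of the auxiliary trivializations used to thicken $\Sing(X)$, and in particular handling the non-orientable case where the framing is genuinely half-integral; this is where the compatibility between $\gl$ and $\gl_2$ must be verified with care, via the auxiliary surface obtained from $F$ by attaching annuli and M\"obius strips along its boundary circles. A secondary point is the uniqueness of the smooth structure, which rests on the fact that the standard thickenings of the five local models are rigid, so that the smoothing introduced along the corners is unique up to isotopy.
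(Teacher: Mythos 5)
The paper offers no proof of this statement---it is quoted directly from Turaev \cite{Tur94}---so the only meaningful comparison is with the standard proof found there (see also Costantino \cite{Cos05}). Your proposal reproduces that proof's structure essentially verbatim: a canonical, rigid $4$-dimensional thickening of $\Nbd(\Sing(X);X)$, followed by attaching $D^2$-bundles over the regions with the gleam serving as a relative Euler number measured against the boundary framing induced by the singular part, the condition $\gl(R)-\frac{1}{2}\gl_2(R)\in\Integer$ accounting for the M\"obius-band parity; the converse direction reads the gleam off the normal data of each region in the same way, so your approach is correct and is essentially the same as the cited one.
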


The construction in (1) is called Turaev's reconstruction. 
A gleam plays a role as a framing coefficient to attach a $2$-handle 
in the original proof of Turaev's reconstruction. 
It is also regarded as a generalized Euler number of an embedded surface in a $4$-manifold. 
In the case where a $4$-manifold is a $D^2$-bundle over a surface $F$, 
the $4$-manifold has a shadow $F$ and the Euler number of $F$ coincides with 
the gleam coming from the above theorem. 

As we mentioned in Introduction, if a shadowed polyhedron $(X,\gl)$ has an LF-structure then $W(X,\gl)$ admits the structure of a Lefschetz fibration.

As far as we know, the first paper that relates shadows and singularity theory is the paper of Costantino and Thurston~\cite{CT08}, where the Stein factorization of a stable map from a $3$-manifold to $\Real^2$ is regarded as a shadow after a small perturbation if necessary.
In~\cite{IK17}, Koda and the first author focused on this relation and studied a relationship between the minimal number of true vertices of shadows and the minimal number of specific singular fibers of stable maps. 
Shadows are used in the study of quantum invariants by various authors,
see for instance \cite{Tur92, Tur94, Bur97, Shu97, Gou98}. 
In particular Carrega and Martelli constructed a shadow containing a given ribbon surface in $D^4$ and studied the Jones polynomial of a ribbon link \cite{CM17}. 
Concerning studies of $4$-manifolds, 
Costantino studied almost complex structures and Stein structures of $4$-manifolds with shadow representatives~\cite{Cos06, Cos08}, and the second author studied shadow representatives of corks, which yield exotic pairs of $4$-manifolds~\cite{Nao17_ojm, Nao_preprint}.
A study of classification of $4$-manifolds according to the numbers of vertices of shadows is now in progress,
see~\cite{Cos06b, Mar11, Nao17,  KMN18_preprint, KN19_preprint}.

\section{From divide to shadow}\label{sec:3}

We first show how to get a shadowed polyhedron of an oriented divide and then prove Theorem~\ref{thm1}.
An oriented divide was introduced in~\cite{GI02a} by Gibson and the first author to determine the link-type of the link of a divide.

\begin{definition}
An {\it oriented divide} $\oP$ in $\Sigma_{g,n}$ 
is the image of a generic immersion of oriented circles into $\Sigma_{g,n}$. 
\end{definition}

\begin{definition}
The {\it link} of an oriented divide $\oP$ in $\Sigma_{g,n}$ is the set  
$L(\oP)$
defined by
\[
    L(\oP):=\{(x,u)\in\partial N(\Sigma_{g,n})\mid x\in \oP,\;\, u\in T_x^+(\oP)\},
\]
where $T_x^+(P)$ is the set of tangent vectors to $\oP$ at $x$ in the same direction as $\oP$.
\end{definition}

Note that for any oriented link in $\partial N(S_{g,n})$ there exists an oriented divide $\oP$ such that the link is isotopic to $L(\oP)$, see~\cite{GI02a}.

Let $q_1,\ldots,q_\ell$ be the images of circles of $\oP$.
For each point $x\in \oP$, let $I(x)$ denote the segment in $N(\Sigma_{g,n})$
consisting of the point $x$ and the points corresponding to the tangent vectors to $\oP$ at $x$ in the same direction as $\oP$.
For each $i=1,\ldots,\ell$, the union $\bigcup_{x\in q_i}I(x)$ is an annulus one of whose boundary component lies on $\Sigma_{g,n}$ and the other lies on $\partial N(\Sigma_{g,n})$.
We denote it by $R(q_i)$.
Then the union of $\Sigma_{g,n}$ and $R(q_i)$, $i=1,\ldots,\ell$, constitutes a simple polyhedron embedded in $N(\Sigma_{g,n})$. We denote this polyhedron by $X_{\oP}$.
Note that the internal regions of $X_{\oP}$ correspond to
the inside regions of $\oP$ on $\Sigma_{g,n}$.

Next we assign a gleam to $X_{\oP}$.
For each inside region $R$ of $\oP$, we define a local contribution to
the gleam at each double point of $\oP$ on $\partial R$ as shown in Figure~\ref{fig:local_contr}. 
In the figure, the curve is  a part of $\oP$ along which the annuli $R(q_i)$'s are attached.
The gleam of $R$ is given as the sum of the local contributions minus the Euler characteristic of the region. We denote this gleam by $\gl_\oP$.

\begin{figure}[htbp]
\includegraphics[scale=0.75]{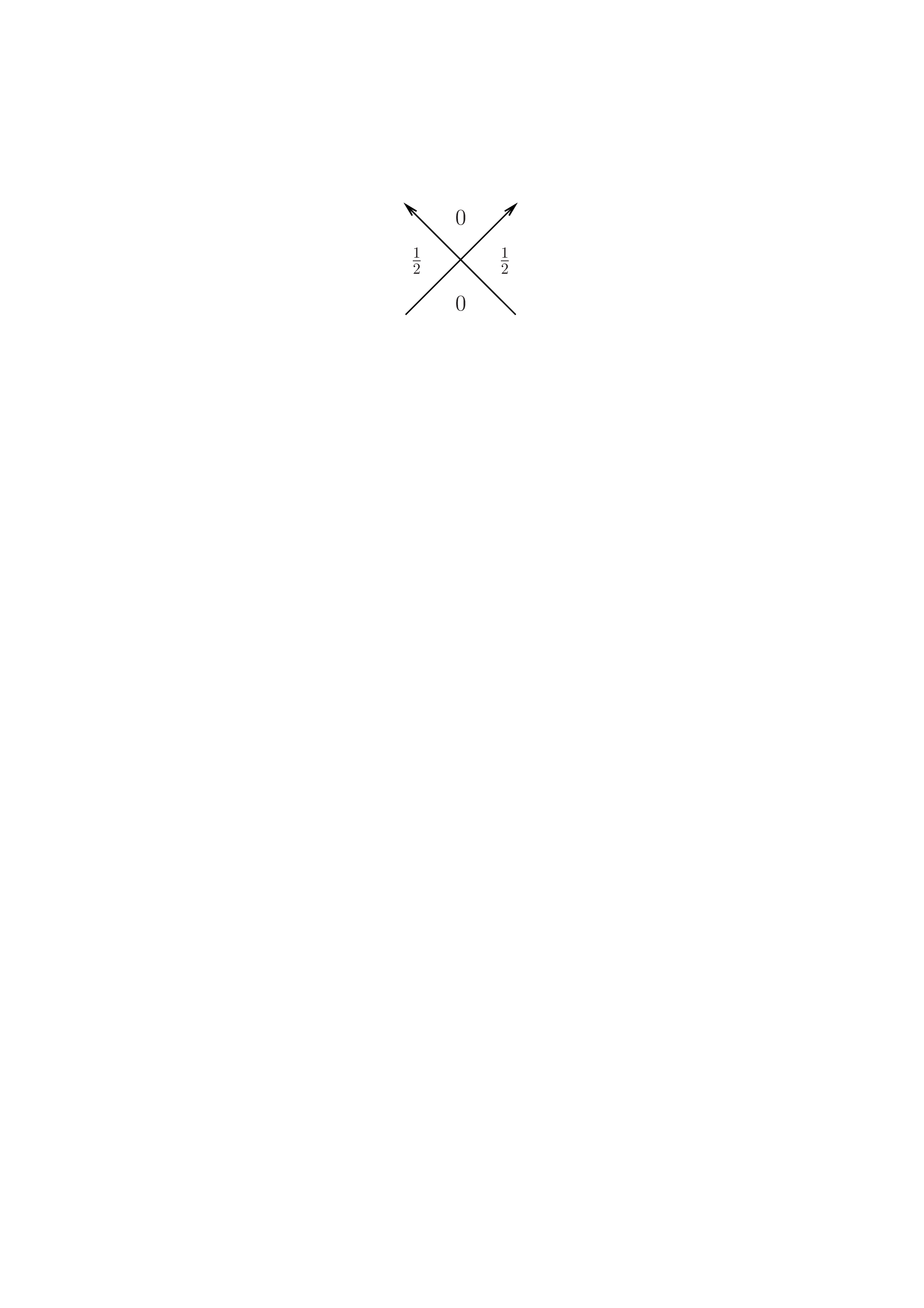}
\caption{Local contribution around a double point of an oriented divide. }
\label{fig:local_contr}
\end{figure}

\begin{lemma}
The pair $(N(\Sigma_{g,n}), X_\oP)$ is obtained from the shadowed polyhedron
$(X_\oP, \gl_\oP)$ by Turaev's reconstruction.
\end{lemma}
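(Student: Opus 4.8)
The plan is to verify that Turaev's reconstruction applied to $(X_\oP,\gl_\oP)$ recovers exactly the pair $(N(\Sigma_{g,n}),X_\oP)$ that was constructed geometrically. Since Turaev's theorem guarantees that \emph{some} gleam reconstructs $N(\Sigma_{g,n})$ with $X_\oP$ as its shadow, the content of the lemma is that this gleam is precisely $\gl_\oP$ as defined via the local contributions in Figure~\ref{fig:local_contr} minus the Euler characteristic. Because the gleam on each internal region is determined region-by-region, and because Turaev's reconstruction is local in the sense that the gleam records the self-intersection (generalized Euler) number of the surface $R$ pushed into the $4$-manifold relative to its attaching framing along $\Sing(X_\oP)$, it suffices to check the value on a single internal region $R$.

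First I would fix an internal region $R$ and recall that $X_\oP=\Sigma_{g,n}\cup\bigcup_i R(q_i)$, so $R$ is an inside region of $\oP$ on $\Sigma_{g,n}$ whose boundary runs along the curve of $\oP$ where the annuli $R(q_i)$ are attached. The gleam that Turaev's reconstruction assigns to $R$ is the relative Euler number of the normal framing of $R$ in $N(\Sigma_{g,n})$ measured against the framing induced by the adjacent sheets along $\partial R$. I would compute this by taking the normal framing of $R$ inside the zero-section $\Sigma_{g,n}\subset N(\Sigma_{g,n})$ — which contributes $-\chi(R)$ from the standard fact that the self-intersection of a region of the zero section, framed by the surface, equals minus its Euler characteristic in this cotangent-type thickening — and then correcting by the local twisting introduced each time $\partial R$ passes through a double point of $\oP$, where the annulus $R(q_i)$ meets $\Sigma_{g,n}$ along the singular set. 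Each such double point contributes the local quantity indicated in Figure~\ref{fig:local_contr}, according to the relative orientations of the two strands of $\oP$ and the side on which $R$ lies.

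The key step is therefore the local computation at a double point: I would place a standard chart around the crossing in which the two sheets $\Sigma_{g,n}$ and the relevant $R(q_i)$ meet along a triple line of $X_\oP$, read off the framing defect contributed by the annulus structure $I(x)=\bigcup_{x}I(x)$ as the tangent direction rotates along $\oP$, and match it against the half-integer local contributions in the figure. Summing these contributions over all double points on $\partial R$ and adding the $-\chi(R)$ term reproduces exactly $\gl_\oP(R)$ by definition. Finally I would check consistency with the $\Integer_2$-gleam: the number of M\"obius bands attached to a collar of $R$ must have the parity forced by $\gl_2(R)$, which follows because a half-integer local contribution occurs precisely at the double points where the two strands of $\oP$ run antiparallel, and these are exactly the crossings where a M\"obius band appears in $\tilde F$.

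The hard part will be the local framing computation at a double point, namely verifying that the geometrically-induced twisting of the annulus $R(q_i)$ as one traverses $\partial R$ through a crossing agrees, sign and magnitude, with the tabulated contributions in Figure~\ref{fig:local_contr}. This requires a careful choice of orientation on $\Sigma_{g,n}$ (and hence on $N(\Sigma_{g,n})$, with the orientation-reversal noted in the footnote) and a bookkeeping of whether the crossing strands are co- or counter-oriented; once that sign convention is pinned down, the remaining additivity over double points and the global $-\chi(R)$ contribution are routine.
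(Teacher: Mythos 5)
Your overall computational strategy---identify the gleam induced by the embedding of each internal region $R$ as a sum of local contributions at the double points plus a global term $-\chi(R)$ coming from comparing with the tangential framing along $\partial R$---is essentially the paper's computation. But there is a genuine gap at the very first step: you invoke part (2) of Turaev's theorem, which presupposes that $X_\oP$ \emph{is} a shadow of $N(\Sigma_{g,n})$, and this is false for the polyhedron in the position in which it was constructed. At a double point $x$ of $\oP$ the two annuli $R(q_i)$ leave the zero-section along the two \emph{tangent} directions of the two strands, which are linearly independent vectors in the fiber $T_x(\Sigma_{g,n})$. Hence near $x$ the union of $\Sigma_{g,n}$ and the two annuli spans all four dimensions, and no chart maps a neighborhood of $x$ in $X_\oP$ into a slice $\Real^3\subset\Real^4$; local flatness fails exactly at the crossings. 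Consequently ``the gleam that Turaev's reconstruction assigns to $R$'' is not even defined until this is repaired.

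The paper's proof devotes its first half to precisely this repair: the annuli are isotoped rel boundary inside $N(\Sigma_{g,n})$ so that the defining vector field along $\oP$ is replaced by one (called $v_{\partial R}$) whose two vectors at each crossing are parallel (``horizontal''), after which the polyhedron is locally contained in an $\Real^3$ and is honestly a shadow. Moreover, the half-integer local contributions of Figure~\ref{fig:local_contr} are read off \emph{from this isotoped position}, by intersecting the strip $A$ determined by $v_{\partial R}$ with the strip $B$ determined by the tangential framing $v_{\partial \overline{R}}$ near the true vertices; your plan to ``read off the framing defect as the tangent direction rotates along $\oP$'' works with the unperturbed, non-locally-flat position, where this comparison does not make sense. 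Once you insert the isotopy step, the remainder of your argument (local contributions at true vertices, the Poincar\'e--Hopf term $-\chi(R)$ from extending the tangential field over $\overline{R}$, and additivity) aligns with the paper's proof; the closing remark you make about $\Integer_2$-gleam parity is consistent but not needed, as it is automatic from the construction.
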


\begin{proof}
The vector field on the left in Figure~\ref{fig:shadow_framing} represents the annulus regions of $X_\oP$ attached along $\oP$. 
We may isotope these annulus regions in $N(\Sigma_{g,n})$ relatively to the position represented by the vector field on the right. 
We denote it by $v_{\partial R}$. 
The two vectors at the crossing are both horizontal, which means that the corresponding polyhedron is locally embedded in $\Real^3$. 
Hence we can regard $X_\oP$ as a shadow of $N(\Sigma_{g,n})$. 
Let $R$ be an internal region of $X_\oP$.
It is sufficient to check that the gleam of $R$ determined from the above embedding of $X_\oP$ into $N(\Sigma_{g,n})$ coincides with $\gl_\oP(R)$. 
Let $H=\Nbd(\partial R;N(\Sigma_{g,n}))$ and $\overline{R}=R\setminus\Int(H)$. 
Note that $\Int(\overline{R})$ is homeomorphic to $R$.
There exists an annulus or a M\"obius strip, denoted by $A$, along $\partial \overline{R}$ in $\partial H$ according to $v_{\partial R}$ as shown in Figure~\ref{fig:thickened_vertex}. 
Let $v_{\partial \overline{R}}$ be a non-zero vector field along $\partial \overline{R}$ consisting of vectors tangent to $\partial \overline{R}$
and $B$ be the annulus along $\partial \overline{R}$ in $\partial H$ that is associated with $v_{\partial \overline{R}}$. 
After suitable perturbation, we may assume that $B$ intersects $A$ transversely finite times only near true vertices. By careful verification of orientation, we may conclude that the local contribution to the gleam near the true vertex is given as in Figure~\ref{fig:local_contr}.

The obstruction to extend $v_{\partial \overline{R}}$ on the whole $\overline{R}$ is $-\chi(\overline{R})=-\chi(R)$, 
which coincides with the self intersection number of $\overline{R}$ in $N(\Sigma_{g,n})$. 
Therefore $\gl_\oP(R)$ is given as the sum of the local contributions minus $\chi(R)$. 
\end{proof}

\begin{figure}[htbp]
\includegraphics[scale=0.75]{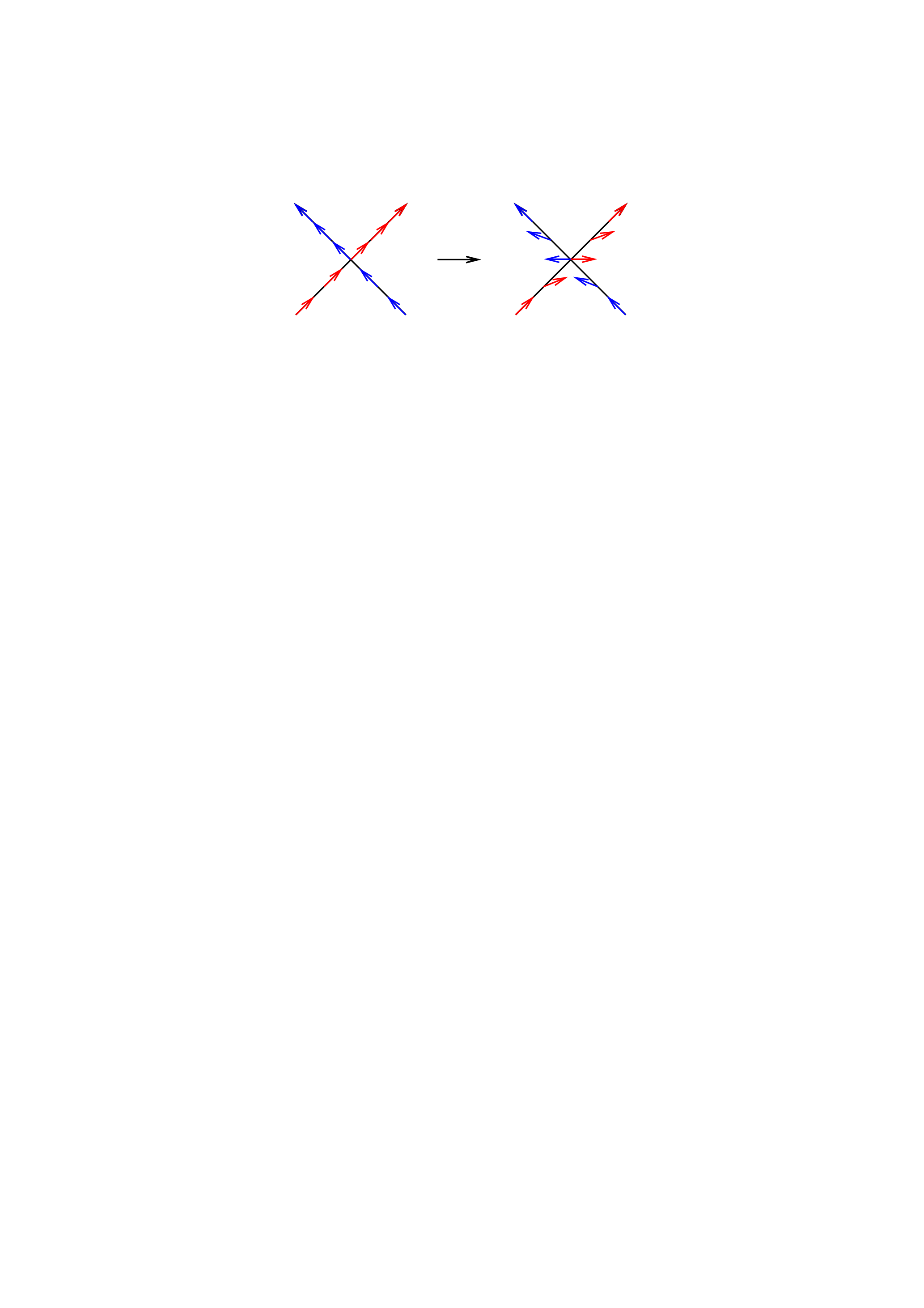}
\caption{The vector field that represents the framing of thickening.}
\label{fig:shadow_framing}
\end{figure}
\begin{figure}[htbp]
\includegraphics[scale=0.75]{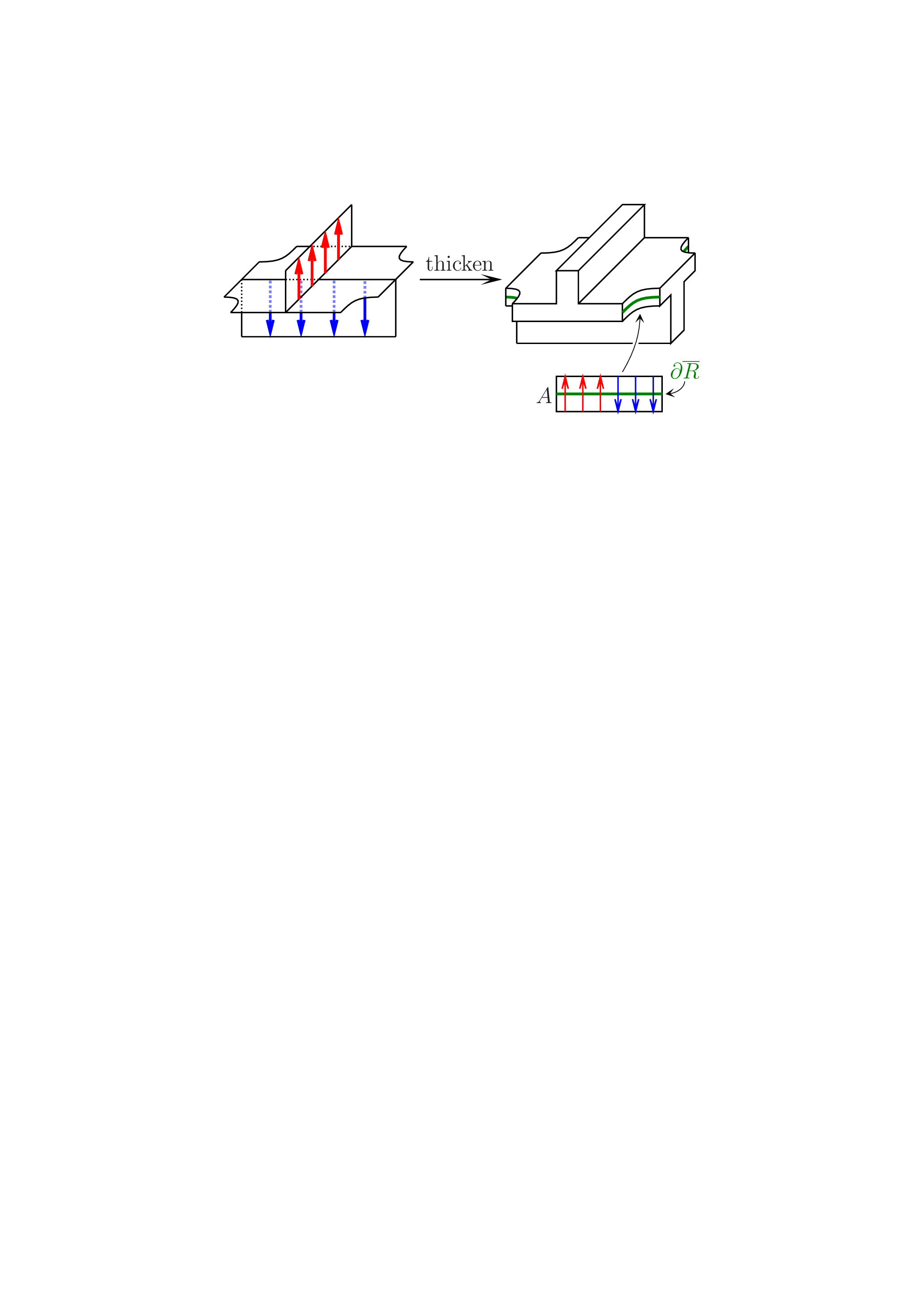}
\caption{The neighborhood of a true vertex in a slice $\Real^3$ and a part of $A$.} 
\label{fig:thickened_vertex}
\end{figure}

\begin{remark}
The internal regions $R$ 
of $X_{\oP}$ lie on $\Sigma_{g,n}$.
In the case of divides in the unit disk, in particular the case of real morsified curves,
these internal regions lie on the real plane $\Real^2\subset\Complex^2$.
\end{remark}

\begin{lemma}\label{LF-property}
The shadowed polyhedron $(X_P, \gl_P)$ of an admissible divide $P$ has an LF-structure.
\end{lemma}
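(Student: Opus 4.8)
The plan is to produce an explicit LF-structure $\mathcal{X}_P=(X';D_1,\dots,D_k)$ directly from the doubling that defines $(X_P,\gl_P)$. For the ordered disks I would take $D_1,\dots,D_k$ to be precisely the internal regions carrying gleam $-1$. By rules (4)--(6) these are of two types: the doubled copy of each inside region $R$ of $P$, and the small quadrilateral that appears at the centre of each double point of $P$ once its two branches are doubled. Admissibility (Definition~\ref{dfn22}) makes every inside region simply connected, so each doubled inside region is a disk, and the central quadrilaterals are disks as well. These $k$ regions are in natural bijection with the critical points of the Morse function $f_P$ of Section~\ref{sec:2.1} --- the extrema sitting in the inside regions and the saddles at the double points --- and I would order them by the corresponding critical values of $f_P$. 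This is the order in which the vanishing cycles are attached in the Lefschetz fibration of $P$, and it is what condition~(iv) requires.

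Next I would build $X'$ by collapsing the regions that are mere by-products of the doubling, namely the triangular regions of gleam $\frac{1}{2}$ (two on each edge of $P$ away from an endpoint) and the bigons of gleam $0$ (one at each endpoint), together with the outside parts of $\Sigma_{g,n}$. Using the free faces provided by the outer boundaries of the attached annuli and, when $n\geq1$, by $\partial\Sigma_{g,n}$, I would exhibit a sequence of elementary collapses of $X_P$ onto a subpolyhedron $X'=\Sigma\cup D_1\cup\cdots\cup D_k$, the annuli along $P_2$ providing the surface $\Sigma:=X'\setminus(D_1\cup\cdots\cup D_k)$. Checking condition~(iii) is then the identification of $\Sigma$ with the fiber surface of $P$: it is a compact surface with $\partial\Sigma=L(P)\subset\partial N(\Sigma_{g,n})$, and I would verify its orientability from a coherent co-orientation along $P_2$. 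Condition~(ii) is then immediate, since the true vertices of $X_P$ are exactly the crossings of $P_2$ and two boundaries $\partial D_i,\partial D_j$ can meet only there.

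It remains to treat the two gleam conditions, (v) and (iv). For (v) I would follow the gleams through the collapse: absorbing a triangular or bigon region into its neighbours shifts their gleams, and the values $\frac{1}{2}$ and $0$ in (4)--(5) are chosen exactly so that after all these absorptions the induced gleam $\gl'$ of every $D_i$ is again $-1$. For (iv) I would fix the orientation of $\Sigma$ found above and compare, at each crossing of two vanishing cycles $\partial D_i$ and $\partial D_j$ with $i<j$, the local value prescribed in Figure~\ref{fig:local_contribute2} with the contribution to $\gl_P$ dictated by the doubling, computed exactly as in the preceding lemma on $(X_{\oP},\gl_{\oP})$ via Figure~\ref{fig:local_contr}; summing these over each internal region of $\Sigma$ must reproduce $\gl'$ there.

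The genuine obstacle is this last matching. It requires a careful case analysis of the ways two doubled strands can meet --- at a doubled double point, at an edge crossing, and near an endpoint --- and a check that the chosen orientation of $\Sigma$ together with the critical-value ordering of the $D_i$ makes the signs of Figure~\ref{fig:local_contribute2} agree, region by region, with the gleams forced by (4)--(6). I expect the cleanest route is to reduce everything to a short list of local models, verify the gleam identity on each, and then conclude globally by additivity of the gleam over the collapse; keeping every orientation and sign consistent across these models is where the real work lies.
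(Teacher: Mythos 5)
You correctly identify the disks $D_1,\ldots,D_k$ (the gleam $-1$ regions: the doubled inside regions and the central quadrilaterals at the double points of $P$), their bijection with the critical points of $f_P$, and the ordering maxima--saddles--minima; all of this agrees with the paper. The proposal breaks down, however, at the construction of $X'$. First, the collapse you describe does not exist: each triangular region is bounded entirely by triple lines of $X_P$ (arcs of $P_2$ along which annuli are attached), so it has no free face. The only free faces of $X_P$ lie on $\partial\Sigma_{g,n}$ and on the outer boundaries of the annuli, and collapsing from the latter removes the annuli themselves (an annulus with one free boundary circle collapses onto its attaching curve), destroying the very surface you want to keep, before any triangle could be reached. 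Second, and more fundamentally, deleting the triangles is self-defeating: the true vertices of $X_P$ are exactly the crossings of $P_2$, and at each such vertex two opposite quadrants are triangles (or strips). Remove them and the point ceases to be a true vertex --- the remaining two quadrants, which are $D_i$'s, merge with the annulus sheets into a single region --- so the $D_i$'s are no longer separate disk regions and conditions (ii), (iv), (v) cannot even be formulated. Relatedly, your reading of rules (4)--(5) is off: nothing is ever absorbed into the $D_i$'s (they carry gleam $-1$ directly by rule 6); the value $\frac{1}{2}$ is chosen to make condition (iv) hold, not condition (v).

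The paper's route keeps the triangles. One collapses $X_P$ only from $\partial\Sigma_{g,n}$, removing the regions adjacent to the boundary; under this collapse the bigons of gleam $0$ (and the triangles along edges of $P$ bordering the outside region) merge with annulus sheets into boundary regions of $X_P'$, so their gleams require no check. The surface is $\Sigma=X_P'\setminus(D_1\cup\cdots\cup D_m)$, which consists of the annuli \emph{together with} the remaining triangular regions: near each true vertex it is a ``pinwheel'' of two opposite triangular quadrants and two annulus sheets, hence locally a surface. Orientability is where admissibility enters, via a checkerboard coloring of the inside regions of $P$ that orients the two triangles along each edge coherently. Condition (iv) is then exactly the local computation you anticipate, but carried out on the triangles you proposed to delete: each internal triangle has three corners, namely the added mid-edge crossing, adjacent to a maximum and a minimum (contribution $-\frac{1}{2}$ as in Figure~\ref{fig:LP_gleam1}), and two crossings at a doubled double point, adjacent to a saddle (contribution $+\frac{1}{2}$ each as in Figure~\ref{fig:LP_gleam2}), giving $-\frac{1}{2}+\frac{1}{2}+\frac{1}{2}=\frac{1}{2}$, which is precisely the gleam assigned by rule 4. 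So the gleam-$\frac{1}{2}$ triangles are not by-products of the doubling; they are the internal regions of $\Sigma$ whose gleams encode the LF-structure, and the proof consists of verifying that fact.
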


\begin{proof}
The polyhedron $X_P$ is obtained from $P$ by doubling it to the divide $P_2$ and attaching annuli along $P_2$ as explained in Introduction. 
Let $X_P'$ be the polyhedron obtained from $X_P$ by 
removing the regions adjacent to $\partial \Sigma_{g,n}$ by collapsing from $\partial \Sigma_{g,n}$.
We may obtain a smooth surface $\Sigma$ from $X_P'$ by removing internal regions corresponding to the singularities of $f_P$.
Since $P$ is admissible, the inside regions of $P$ admit a checkerboard coloring with colors, say, black and write. To each edge of $P$, we assign the orientation induced from the orientation of the write region adjacent to that edge.
Two triangular regions of $P_2$ correspond to the edge and we define that the orientation of the first triangular region is positive and the second one is negative.
We can see that the orientations of all triangular regions 
are consistent, that is, $\Sigma$ is orientable.

To prove the lemma, it is enough to show that the gleam on each internal region of
$X_P'$ coincides with the one determined by the conditions~(iv) and~(v) in the definition of LF-structure.
Note that a bigon with gleam $0$ in Step~5 of the doubling (cf.~the regions labeled $c$ in Figure~\ref{fig:doubling}) is not an internal region of $X_P'$. Hence we don't need to check its gleam.
Let $R_1,\ldots,R_m$ be the internal regions of $X_P'$ corresponding to maxima, saddles and minima of $f_P$.
We order these regions such that $R_1,\ldots,R_{m_1}$ are maxima,
$R_{m_1+1},\ldots,R_{m_2}$ are saddles and  $R_{m_2+1},\ldots,R_{m}$ are minima.
The gleams of these regions are $-1$, which coincide with the condition~(v).

Now we check the coincidence of the gleams on the remaining internal regions, which are the triangular regions of $P_2$ on $\Sigma$.
There are two choices of the orientation of $\Sigma$, either the one shown on the top in Figure~\ref{fig:LP_gleam1} or the opposite one. We fix the orientation shown in the figure.

First we check the local contribution around a crossing point adjacent to a region of a maximum and a region of a minimum. As shown on the bottom in Figure~\ref{fig:LP_gleam1}, the local contribution given according to Figure~\ref{fig:local_contribute2} is $-\frac{1}{2}$, that is, the local contribution to each of the triangular regions on the top in Figure~\ref{fig:LP_gleam1} is $-\frac{1}{2}$.

\begin{figure}[htbp]
\includegraphics[scale=0.65]{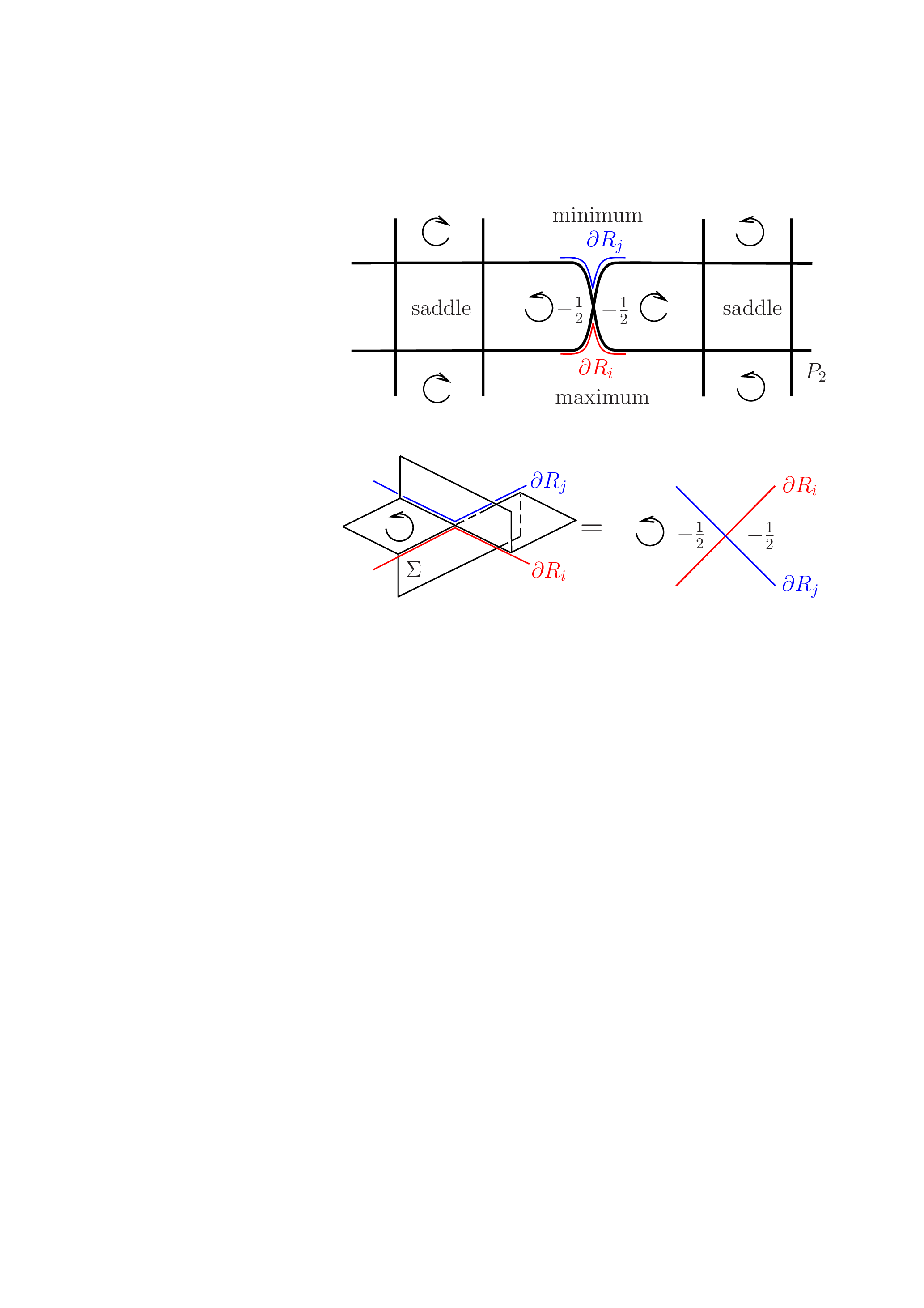}
\caption{Local contribution around a crossing point adjacent to a maximum and a minimum.}
\label{fig:LP_gleam1}
\end{figure}

Next, we check the local contribution around a crossing point adjacent to a region of a maximum and a region of a saddle,
and also around a crossing point adjacent to a region of a saddle and a region of a minimum.
As shown in Figure~\ref{fig:LP_gleam2}, the local contribution to each of the triangular regions on the top in Figure~\ref{fig:LP_gleam2} is $\frac{1}{2}$.

\begin{figure}
\includegraphics[scale=0.65]{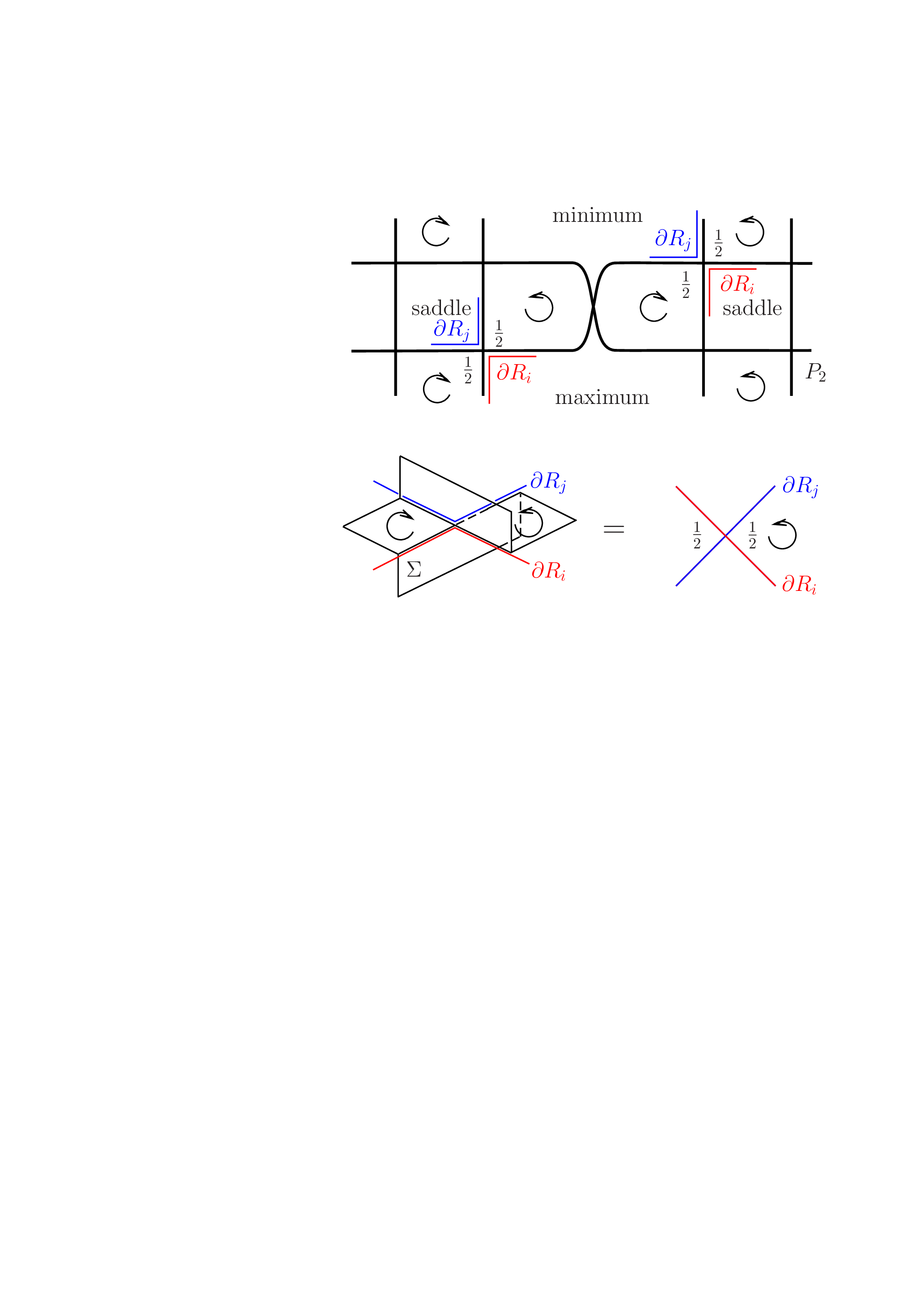}
\caption{Local contribution around crossing points adjacent to saddles.}
\label{fig:LP_gleam2}
\end{figure}

Summing up these contributions, we see that the gleam on each triangular region of $P_2$ on $\Sigma$ is $-\frac{1}{2}+\frac{1}{2}+\frac{1}{2}=\frac{1}{2}$, which coincides with the gleam $\gl_P$ of that region. This completes the proof.
\end{proof}

Let $\mathcal{X}_P$ denote the LF-structure on $(X_P,\gl_P)$ determined in the proof of Lemma~\ref{LF-property}.

\begin{proof}[Proof of Theorem~\ref{thm1}]
Let $P$ be an admissible divide on $\Sigma_{g,n}$,
$q_1, \ldots, q_\ell$ be the images of immersed intervals and circles of $P$,
$R_1,\ldots, R_m$ be the inside regions of $P$ and $c_1, \ldots, c_\delta$ be the double points of $P$.
Assign an orientation to each $q_i$.
For each point $x\in q_i$, let $I_+(x)$ (resp. $I_-(x)$) 
denote the segment in $N(\Sigma_{g,n})$
consisting of the point $x$ and the points corresponding to the
vectors tangent to $\oP$ at $x$
in the same (resp. opposite) direction to the orientation of $q_i$.
Each of $\bigcup_{x\in q_i}I_+(x)$ and $\bigcup_{x\in q_i}I_-(x)$ is an annulus one of whose boundary component lies on $\Sigma_{g,n}$ and the other lies on $\partial N(\Sigma_{g,n})$.
We denote $\bigcup_{x\in q_i}I_+(x)$ and $\bigcup_{x\in q_i}I_-(x)$ 
by $R_+(q_i)$ and $R_-(q_i)$, respectively.
The union of $\Sigma_{g,n}$, $R_+(q_i)$'s and $R_-(q_i)$'s for $i=1,\ldots,\ell$ 
is a non-simple polyhedron embedded in $N(\Sigma_{g,n})$, which we denote by $\hat X_P$.
The singular fiber $F_P^{-1}(0)\cap N(\Sigma_{g,n})$ is isotopic to 
$\bigcup_{i=1}^\ell (R_+(q_i)\cup R_-(q_i))$.
As explained in Section~\ref{sec:2.1}, $N(\Sigma_{g,n})$ is recovered from 
$F_P^{-1}(D_\eta)\cap N(\Sigma_{g,n})$ by attaching $2$-handles corresponding to the inside regions of $P$.

Next we perturb $\hat X_P$ in $N(\Sigma_{g,n})$ so that it becomes simple.
The regions $R_+(q_i)$ and $R_-(q_i)$ can be represented by vector fields based on $q_i$
as shown on the left in Figure~\ref{fig:deform_arrow},
which we denote by $v_+(q_i)$ and $v_-(q_i)$, respectively.
A deformation of $R_+(q_i)$ and $R_-(q_i)$ in $N(\Sigma_{g,n})$ can be represented by 
a deformation of the base curve $q_i$ and an isotopy of these vector fields. 
We perturb $\hat X_P$ such that the base curve becomes the doubled curve $P_2$ 
and the vector fields such that they are based on $P_2$ and 
tangent to $P_2$ in the same direction as $\oPP$, see on the right in Figure~\ref{fig:deform_arrow}.
Here the orientation of $\oPP$ is the one induced from the orientation of triangular internal regions given in the first paragraph of the proof of Lemma~\ref{LF-property}.
The obtained polyhedron $X_\oPP$ is embedded in $N(\Sigma_{g,n})$ and the embedding is represented by the gleam $\gl_\oPP$ of the oriented divide $\oPP$.
The shadowed polyhedron $(X_\oPP, \gl_\oPP)$ is nothing but $(X_P,\gl_P)$ by definition.

\begin{figure}[htbp]
\includegraphics[scale=0.75]{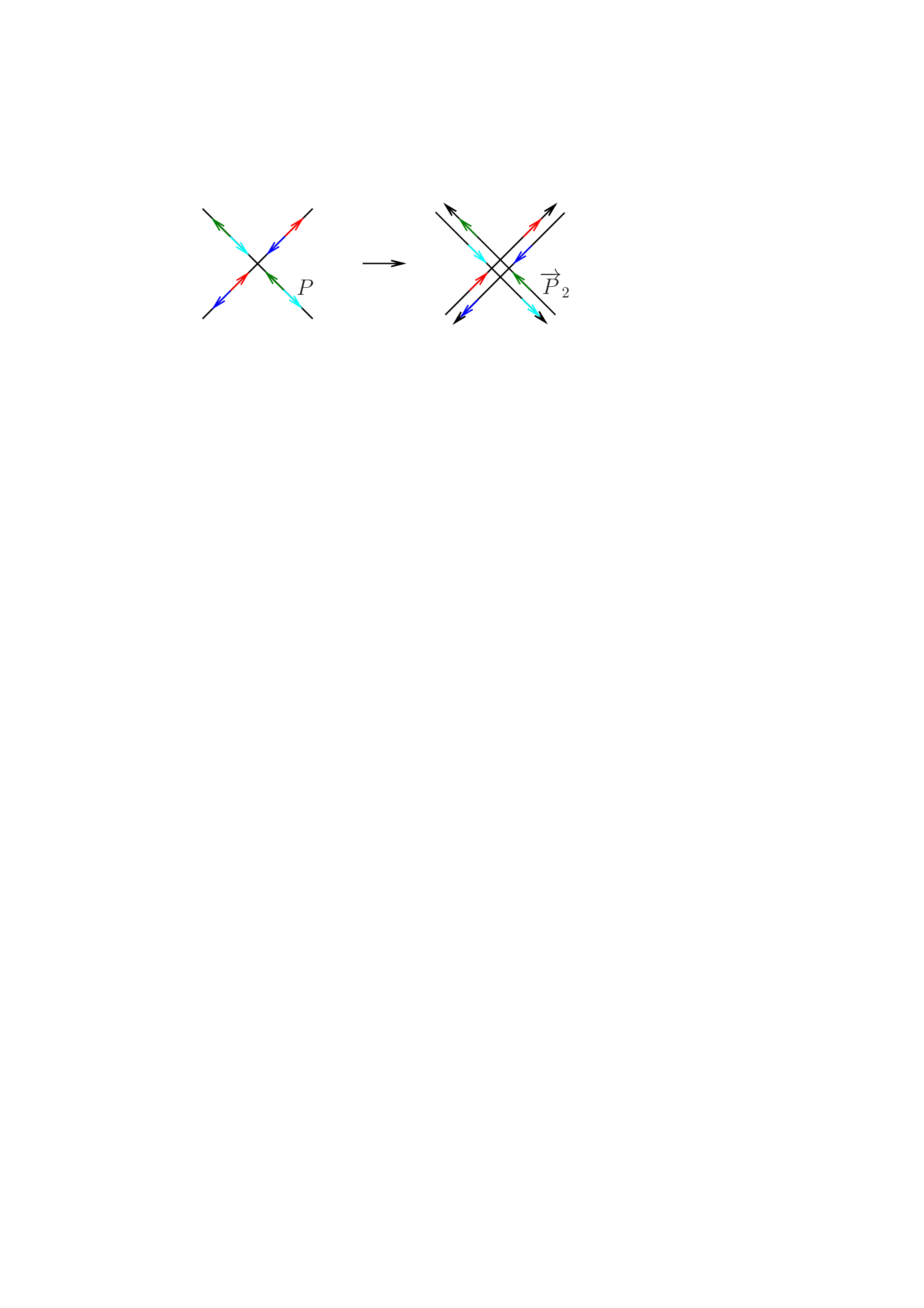}
\caption{A deformation of $R_+(q_i)$ and $R_-(q_i)$.}
\label{fig:deform_arrow}
\end{figure}

Let $\Sigma$ denote the surface
obtained from $X_P$ by removing all regions contained in the inside regions of $P$ and those containing a double point of $P$.
Since the singular fiber $F_P^{-1}(0)\cap N(\Sigma_{g,n})$ is isotopic 
to the surface $\Sigma$ outside small neighborhoods $\Nbd(c_k;N(\Sigma_{g,n}))$,
the nearby fiber $F_P^{-1}(t)\cap N(\Sigma_{g,n})$, $0<|t|\ll 1$, is also isotopic to $\Sigma$ outside $\Nbd(c_k;N(\Sigma_{g,n}))$'s.
In $\Nbd(c_k;N(\Sigma_{g,n}))$,
$F_P^{-1}(t)\cap \Nbd(c_k;N(\Sigma_{g,n}))$ and $\Sigma\cap \Nbd(c_k;N(\Sigma_{g,n}))$
are annuli in the $4$-ball $\Nbd(c_k;N(\Sigma_{g,n}))$.
Furthermore, since $F_P^{-1}(t)\cap \partial \Nbd(c_k;N(\Sigma_{g,n}))$ 
and $\Sigma\cap \partial \Nbd(c_k;N(\Sigma_{g,n}))$ are isotopic as oriented links in 
$\partial \Nbd(c_k;N(\Sigma_{g,n}))$, these annuli are isotopic in $\Nbd(c_k;N(\Sigma_{g,n}))$.
Hence $F^{-1}_P(t)\cap N(\Sigma_{g,n})$ and $\Sigma$ are isotopic.

It had been shown in Lemma~\ref{LF-property} that $(X_P, \gl_P)$ has the LF-structure $\mathcal{X}_P$.
Moreover, in the proof of Lemma~\ref{LF-property},
the order of the internal regions for
the definition of 
LF-structure is maxima, saddles and minima, which is the order of the right-handed Dehn twists of the monodromy of the fibration of the divide $P$.
Both of the Lefschetz fibrations of $\mathcal{X}_P$ on $(X_P,\gl_P)$ and $P$ are obtained from $F^{-1}_P(D_\eta)\cap N(\Sigma_{g,n})\cong \Nbd(\Sigma;N(\Sigma_{g,n}))$ by attaching $2$-handles corresponding to the inside regions of $P$ along the same vanishing cycles with fiber surface framing minus $1$ and with the same order.
Thus the two Lefschetz fibrations are isomorphic.
This completes the proof.
\end{proof}

\section{Lefschetz fibrations of certain free divides}

A free divide is a divide in the unit disk whose endpoints are not necessary on the boundary of the disk. Let $D$ denote the unit disk.

\begin{definition}
A {\it free divide} $Q$ 
is the image of a generic immersion of intervals and circles into $D$.
\end{definition}

In this paper, we only study free divides consisting of one immersed interval.
We further assume that one of the endpoints lies on $\partial D$
and the other is not adjacent to the outside region,
which is called a free divide {\it with one free endpoint}.

\begin{definition}
Let $Q$ be a free divide in $D$ consisting of one immersed interval and with one free endpoint. The {\it link} of $Q$ is defined to be the link of an oriented divide obtained from $Q$ by doubling it according to the same rule as explained in Introduction.
\end{definition}

Remark that though there are two choices for the orientation of the doubled curve $Q_2$ of $Q$,
the link-type of $L(Q)$ does not depend on this choice since they are isotopic by $\pi$-rotation of the fibers of the bundle $\partial N(D)\cap \hat B\to B\subset D$.
If one immersed interval has two free endpoints then we need to introduce signs to these endpoints to define its link, see~\cite{GI02b}. 
We also remark that we only consider a free divide not in $\Sigma_{g,n}$ but in the unit disk.
This is because we do not know the admissibility condition for free divides.

\begin{proof}[Proof of Theorem~\ref{thm2}]
Let $Q$ be a free divide in the assertion.
We first prove case~(1).
There are two edges adjacent to $c$ and the outside region, one of which is also adjacent to the region whose boundary contains the free endpoint. We denote it by $e$. 
To make a shadowed polyhedron of $Q$, we use the following doubling method:
\begin{itemize}
\item[1.] double the curve of $Q$;
\item[2.] for each endpoint of $Q$, close the corresponding two endpoints of the double curve by a small half circle;
\item[3.] for each edge of $Q$
that is neither adjacent to an endpoint nor the edge $e$,
add a crossing between the two edges of the doubled curve parallel the edge.
\end{itemize}
The doubled curve near the edge $e$ becomes as shown on the left in Figure~\ref{fig:free_endpoint} or its mirror image.
We prove the assertion in the former case. The latter case can also be proved by the same argument.

Let $\oQ$ be an oriented divide obtained from $Q$ by applying this double method,
deforming the curve near the free endpoint as shown in Figure~\ref{fig:free_endpoint}
and assigning any orientation to the doubled curve. 
Note that the link-type of the link $L(\oQ)$ of $\oQ$ does not depend on the choice of the assigned orientation.
Obviously, $L(\oQ)$ and $L(Q)$ are isotopic.
Hence it is enough to show that $L(\oQ)$ satisfies the properties in the assertion.
Let $X'_{\oQ}$ be the shadowed polyhedron obtained from the shadowed polyhedron of $\oQ$ by removing the boundary region adjacent to $\partial D$.
We may obtain the surface $\Sigma$ for an LF-structure from $X'_{\oQ}$ by removing suitable internal regions as we did for divides in the proof of Lemma~\ref{LF-property}. 
We set the orientation of $\Sigma$ as shown on the right in Figure~\ref{fig:free_endpoint}.
Regarding the vanishing cycle about the bigon in the figure as a saddle, we set the order of the regions $X'_{\oQ}\setminus \Sigma$ by the order of maxima, saddles and minima as for divides.
This order satisfies the condition~(iv) of LF-structure. Thus the assertion in case~(1) is proved.

\begin{figure}[htbp]
\includegraphics[scale=0.50]{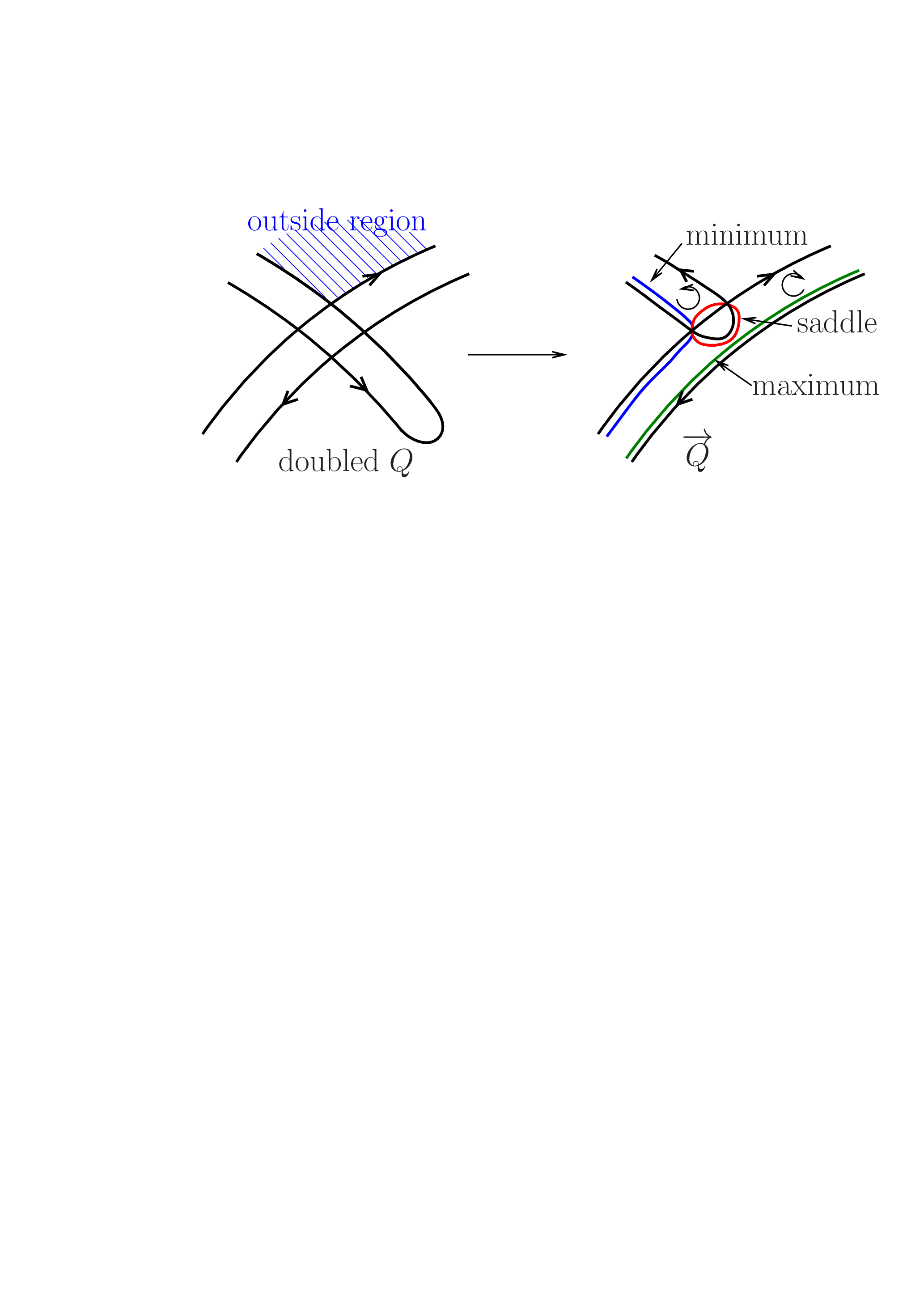}
\caption{A deformation of the doubled curve near the free endpoint in case~(1).}
\label{fig:free_endpoint}
\end{figure}

Next we prove case~(2). 
Let $c'$ be the double point of $Q$ connected to its non-free endpoint by a single edge and $e$ be the edge connecting $c$ and $c'$. 
There are two regions adjacent to $e$, one of which is adjacent to the free endpoint and we denote the other one by $R$. 
Let $e'$ be the edge of $Q$ adjacent to $c'$ and lying between $R$ and the outside region.
We apply the doubling method to $Q$ as in case~(1) with modification that,
for each of the edges $e$ and $e'$, we do not add a crossing between the two edges of the doubled curve parallel to the edge. 
The doubled curve near the edge $e$ becomes as shown on the left in Figure~\ref{fig:free_endpoint3} or its mirror image.
We prove the assertion in the former case. The latter case can also be proved by the same argument.
As in case~(1), we define $\oQ$ as in Figure~\ref{fig:free_endpoint3}, make $X'_{\oQ}$,
define $\Sigma$ and fix its orientation.
Let $C_1, C_2, C_3$ be the vanishing cycles shown on the right in Figure~\ref{fig:free_endpoint3} and $R_1,R_2,R_3$ be the internal regions of $X'_\oQ$ corresponding to these cycles. We regard $C_1$ and $C_3$ as maxima and set the order of regions $X'_\oQ\setminus\Sigma$ except $R_2$ by the order of maxima, saddles and minima as in case~(1). We then set the order of $R_2$ as $R_1<R_2<R_3$.
This order satisfies the condition~(iv) of LF-structure and the proof completes.
\end{proof}

\begin{figure}[htbp]
\includegraphics[scale=0.60]{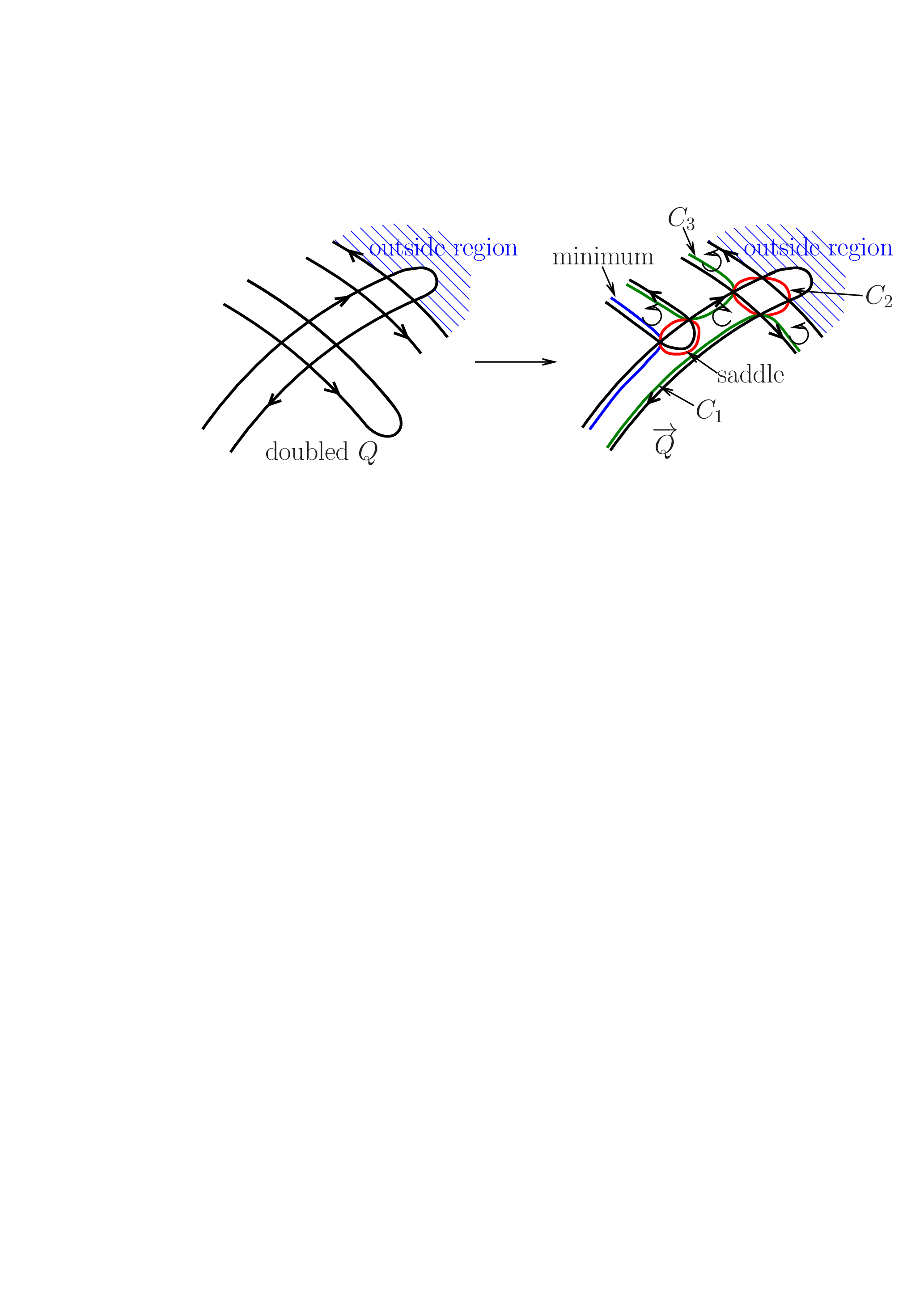}
\caption{The doubled curve around the arc connecting $c$ and the non-free endpoint in case~(2).}
\label{fig:free_endpoint3}
\end{figure}

We conclude the paper with one example.

\begin{example}
Let $Q$ be a free divide shown on the left in Figure~\ref{fig:free_divide}.
The shadowed polyhedron is given on the right.
The union of the regions with marks of orientations and the annuli attached along $\oQ$ is the surface $\Sigma$ of the LF-structure, which is a regular fiber of the Lefschetz fibration.
Let $C_1, C_2, C_3, C_4$ be the four vanishing cycles along which the regions $R_1, R_2, R_3, R_4$ with gleam $-1$ are attached.
The monodromy of the fibered link $L(Q)$ is the product of right-handed Dehn twists $\sigma_1, \sigma_2, \sigma_3, \sigma_4$ along $C_1, C_2, C_3, C_4$ in this order. The monodromy matrix is given as
\[
\begin{split}
M_{\sigma_4}M_{\sigma_3}M_{\sigma_2}M_{\sigma_1}
&=
\begin{pmatrix} 1 & 0 & 0 & 0 \\ 0 & 1 & 0 & 0 \\ 0 & 0 & 1 & 0 \\ -1 & 1 & 1 & 1 \end{pmatrix}
\begin{pmatrix} 1 & 0 & 0 & 0 \\ 0 & 1 & 0 & 0 \\ 0 & 0 & 1 & -1 \\ 0 & 0 & 0 & 1 \end{pmatrix}
\begin{pmatrix} 1 & 0 & 0 & 0 \\ 1 & 1 & 0 & -1 \\ 0 & 0 & 1 & 0 \\ 0 & 0 & 0 & 1 \end{pmatrix}
\begin{pmatrix} 1 & -1 & 0 & 1 \\ 0 & 1 & 0 & 0 \\ 0 & 0 & 1 & 0 \\ 0 & 0 & 0 & 1 \end{pmatrix} \\
&=
\begin{pmatrix} 1 & -1 & 0 & 1 \\ 1 & 0 & 0 & 0 \\ 0 & 0 & 1 & -1 \\ 0 & 1 & 1 & -1 \end{pmatrix},
\end{split}
\]
where $M_{\sigma_i}$ is the monodromy matrix of $\sigma_i$.
The characteristic polynomial of this matrix is $t^4-t^3+t^2-t+1$, which is the Alexander polynomial of the $(2,5)$-torus knot.
It is known in~\cite{GI02b} that the link of this free divide is a $(2,5)$-torus knot.
Actually, we can check that it is a positive $(2,5)$-torus knot.
We can also check it by describing a Kirby diagram of the shadowed polyhedron.
\end{example}

\begin{figure}[htbp]
\includegraphics[scale=0.65]{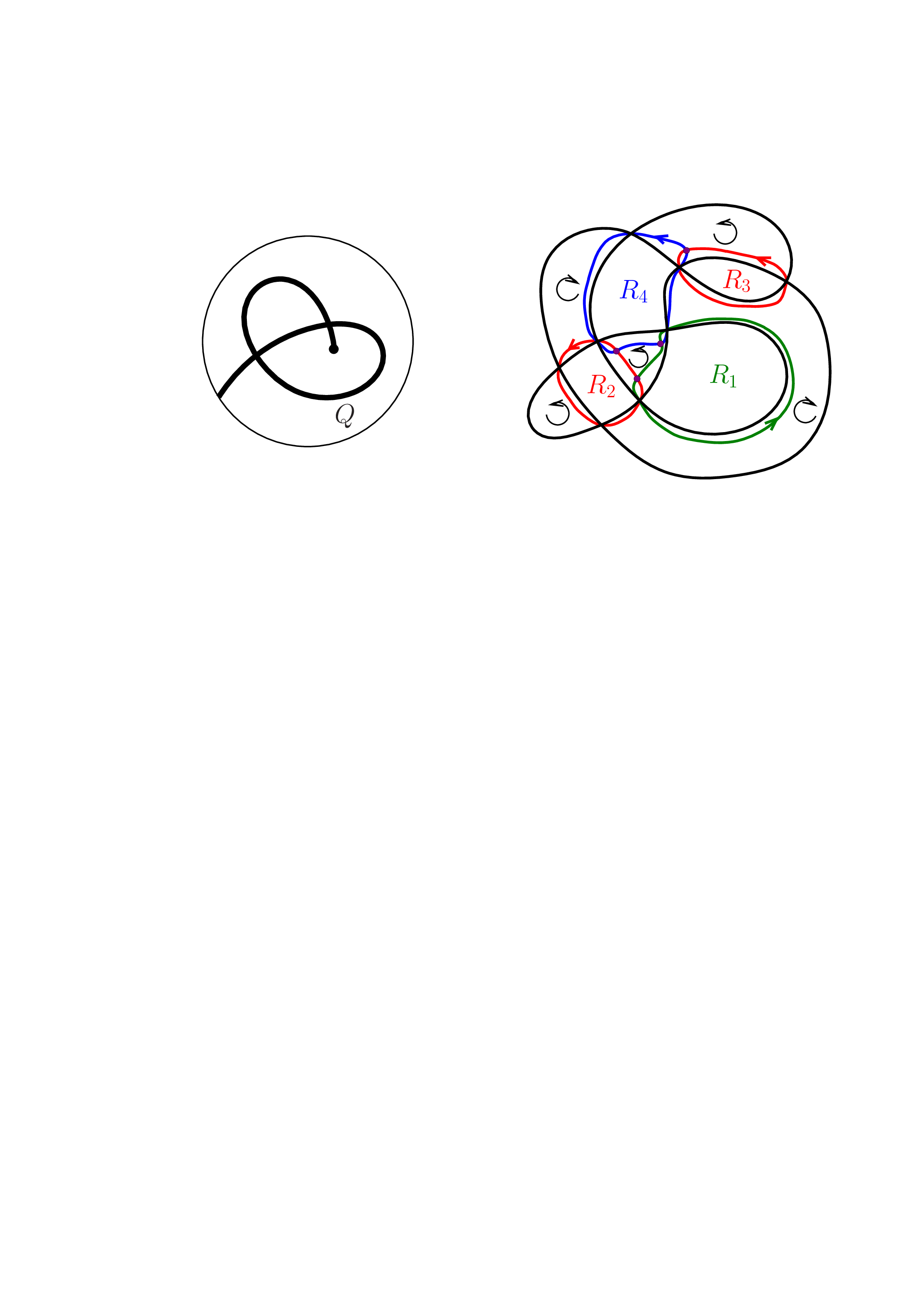}
\caption{An example of a free divide and its shadowed polyhedron.}
\label{fig:free_divide}
\end{figure}


\begin{thebibliography}{99}

\bibitem{AC75a}
N.~A'Campo, 
{\it Le groupe de monodromie du d\'{e}ploiement des singularit\'{e}s isol\'{e}es de courbes planes I},
Math. Ann. {\bf 213} (1975), 1--32.

\bibitem{AC75b}
N.~A'Campo, 
{\it Le groupe de monodromie du d\'{e}ploiement des singularit\'{e}s isol\'{e}es de courbes planes II},
Actes du Congr\`{e}s International des Mathematiciens, Vancouver, 1974, 395--404.

\bibitem{AC99}
N.~A'Campo, 
{\it Real deformations and complex topology of plane curve singularities},
Ann. de la Facult\'{e} des Sciences de Toulouse {\bf 8} (1999), no. 1, 5--23.

\bibitem{AC98}
N.~A'Campo, 
{\it Generic immersions of curves, knots, monodromy and gordian number},
Publ. Math. de l'I.H.E.S. {\bf 88} (1998), 151--169.

\bibitem{AC98b}
N.~A'Campo, 
{\it Planar trees, slalom curves and hyperbolic knots},
Publ. Math. de l'I.H.E.S. {\bf 88} (1998), 171--180.


\bibitem{Bur97}
U. Burri, 
{\it For a fixed Turaev shadow Jones-Vassiliev invariants depend polynomially on the gleams}, 
Comment. Math. Helv. {\bf 72} (1997), 110--127.

\bibitem{CM17}
A.~Carrega, B.~Martelli, 
{\it Shadows, ribbon surfaces, and quantum invariants}, 
Quantum Topol. {\bf 8} (2017), no. 2, 249--294.

\bibitem{Cos05}
F. Costantino, 
Shadows and branched shadows of $3$ and $4$-manifolds,
Scuola Normale Superiore, Edizioni della Normale, Pisa, Italy, 2005.

\bibitem{Cos06}
F. Costantino,
{\it Stein domains and branched shadows of 4-manifolds},
Geom. Dedicata {\bf 121} (2006), 89--111.

\bibitem{Cos06b}
F. Costantino,
{\it Complexity of 4-manifolds}, 
Experiment. Math. {\bf 15} (2006), no. 2, 237--249.

\bibitem{Cos08}
F. Costantino,
{\it Branched shadows and complex structures on 4-manifolds},
J. Knot Theory Ramifications {\bf 17} (2008), no. 11, 1429--1454.

\bibitem{CT08}
F. Costantino, D.~Thurston,
{\it 3-manifolds efficiently bound 4-manifolds},
J. Topol. {\bf 1} (2008), no. 3, 703--745. 

\bibitem{CP00}
O.~Couture, B. Perron,
{\it Representative braids for links associated to plane immersed curves},
J. Knot Theory Ramifications {\bf 9} (2000), 1--30.



\bibitem{FPS17}
S.~Fomin, P.~Pylyavskyy, E.~Shustin,
{\it Morsifications and mutations}, 
preprint, arXiv:1711.10598 [math.GT]

\bibitem{GI02a}
W.~Gibson, M. Ishikawa,
{\it Links of oriented divides and fibrations in link exteriors},
Osaka J. Math. {\bf 39} (2002), 681--703. 

\bibitem{GI02b}
W.~Gibson, M. Ishikawa,
{\it Links and gordian numbers associated with generic immersions of intervals}, 
Topology Appl. {\bf 123} (2002),  609--636. 


\bibitem{GHY02}
H.~Goda, M.~Hirasawa, Y.~Yamada, 
{\it Lissajous curves as A'Campo divides, torus knots and their fiber surfaces},
Tokyo J. Math. {\bf 25} (2002), 485--491.

\bibitem{Gou98}
M.N.~Goussarov, 
{\it Interdependent modifications of links and invariants of finite degree}, 
Topology {\bf 37} (1998), 595--602.

\bibitem{GZ74a}
S.M.~Gusein-Zade,
{\it Intersection matrices for certain singularities of functions of two variables},
Funct. Anal. Appl. {\bf 8} (1974), 10--13.

\bibitem{GZ74b}
S.M.~Gusein-Zade,
{\it Dynkin diagrams of singularities of functions of two variables},
Funct. Anal. Appl. {\bf 8} (1974), 295--300.

\bibitem{GZ77}
S.M.~Gusein-Zade,
{\it The monodromy groups of isolated singularities of hypersurfaces},
Russian Math. Surveys {\bf 32} (1977), 23--69.

\bibitem{Hir02}
M.~Hirasawa, 
{\it Visualization of A'Campo's fibered links and unknotting operation},
Proceedings of the First Joint Japan-Mexico Meeting in Topology (Morelia, 1999), 
Topology Appl. {\bf 121} (2002), no. 1-2, 287--304.

\bibitem{Ishi04}
M.~Ishikawa,
{\it Tangent circle bundles admit positive open book decompositions along arbitrary links}, Topology {\bf 43} (2004), 215--232. 


\bibitem{IK17}
M.~Ishikawa, Y.~Koda, 
{\it Stable maps and branched shadows of 3-manifolds},
Math. Ann. {\bf 367} (2017), no. 3-4, 1819--1863.


\bibitem{Kaw02}
T.~Kawamura, 
{\it Quasipositivity of links of divides and free divides}, 
Topology Appl. {\bf 125} (2002), no. 1, 111--123. 

\bibitem{KMN18_preprint}
Y.~Koda, B.~Martelli, H.~Naoe,
{\it Four-manifolds with shadow-complexity one}, preprint, arXiv:1803.06713 [math.GT]

\bibitem{KN19_preprint}
Y.~Koda, H.~Naoe, 
{\it Shadows of acyclic 4-manifolds with sphere boundary}, preprint, arXiv:1905.00809 [math.GT]


\bibitem{Mar11}
B. Martelli,
{\it Four-manifolds with shadow-complexity zero}, 
Int. Math. Res. Not. {\bf 2011}, no. 6, 1268--1351.

\bibitem{Nao17_ojm}
H.~Naoe, 
{\it Mazur manifolds and corks with small shadow complexities}, 
Osaka J. Math. {\bf 55} (2018), no. 3, 479--498. 

\bibitem{Nao17}
H.~Naoe,
{\it Shadows of 4-manifolds with complexity zero and polyhedral collapsing},
Proc. Amer. Math. Soc. {\bf 145} (2017), no. 10, 4561--4572.

\bibitem{Nao_preprint}
H.~Naoe,
{\it Corks with large shadow-complexity and exotic 4-manifolds}, 
Experiment. Math., {\sf https://doi.org/10.1080/10586458.2018.1514332}, in press. 

\bibitem{NR87}
W.~Neumann, L.~Rudolph,
{\it Unfoldings in Knot Theory},
Math. Ann. {\bf 278} (1987), 409--439.

\bibitem{Ozb18}
B.~Ozbagci,
{\it Genus one Lefschetz fibrations on disk cotangent bundles of surfaces}, 
preprint, arXiv:1803.01649 [math.GT]

\bibitem{Shu97}
A.~Shumakovitch, 
{\it Shadow formula for the Vassiliev invariant of degree two}, 
Topology {\bf 36} (1997), no. 2, 449--469. 

\bibitem{Tur92}
V.G.~Turaev,
{\it Shadow links and face models of statistical mechanics},
J. Differential Geom. {\bf 36} (1992), no. 1, 35--74.

\bibitem{Tur94}
V.G.~Turaev, 
{\it Quantum invariants of knots and $3$-manifolds}, 
De Gruyter Studies in Mathematics, vol 18, Walter de Gruyter \& Co., Berlin, 1994.

\bibitem{Yam06}
Y.~Yamada,
{\it Finite Dehn surgery along A'Campo's divide knots},
Singularity theory and its applications, Adv. Stud. Pure Math., 43, Math. Soc. Japan, Tokyo, 2006, 573--583.

\bibitem{Yam09}
Y.~Yamada, 
{\it Lens space surgeries as A'Campo's divide knots},
Algebr. Geom. Topol. {\bf 9} (2009), no. 1, 397--428.

\end{thebibliography}
\end{document}